\theoremstyle{definition}
\newtheorem{defn}{Definition}[section]
\theoremstyle{plain}
\newtheorem{theorem}{Theorem}[section]
\newtheorem{proposition}{Proposition}[section]
\newtheorem{conj}[defn]{Conjecture}
\newtheorem{cor}[defn]{Corollary}
\newtheorem{lem}[defn]{Lemma}
\newtheorem{prop}[defn]{Proposition}
\newtheorem{thm}[defn]{Theorem}
\theoremstyle{remark}
\newtheorem{rmk}[defn]{Remark}
\DeclareMathOperator{\supp}{supp}
\newcommand{\set}[1]{\left\{ #1 \right\}}
\newcommand{\paren}[1]{\left( #1 \right)}
\newcommand{\gen}[1]{\left\langle #1 \right\rangle}
\newcommand{\pkgen}[1]{\left\langle #1 \right\rangle_{K,p}}
\newcommand{\norm}[1]{\left\| #1 \right\|}
\newcommand{\abs}[1]{\left| #1 \right|}
\newcommand{\N}{\mathbb N}
\newcommand{\R}{\mathbb R}
\newcommand{\round}[1]{{\ooalign{\hfil\raise .10ex\hbox{\scriptsize#1}\hfil\crcr\mathhexbox20D}}}
\newcommand{\ff}{\mathcal{F}}
\newcommand{\pdxi}{\frac{\partial}{\partial x^i}}
\newcommand{\eng}{\mathcal{E}}
\newcommand{\Dom}{\mathcal{F}}
\newcommand{\dom}{\mathcal{F}}
\renewcommand{\Im}{\operatorname{Im}}
\newcommand{\cal}[1]{\ensuremath{\mathcal{#1}}}
\newcommand{\SG}{\operatorname{SG}}
\newcommand{\idea}{\cal{I}}
\newcommand{\grad}{\nabla}
\newcommand{\altform}{\Omega}
\newcommand{\coneform}{\altform_{C_1}^1}
\newcommand{\cform}{\altform_{C}^1}
\newcommand{\cocform}{\altform_1^{C}}
\newcommand{\conenorm}[1]{\left\Vert#1\right\Vert_{C^1}}
\newcommand{\spn}{\operatorname{span}}
\newcommand{\irt}{\mathcal{H}}
\newcommand{\loc}{\text{loc}}
\newcommand{\ltwoform}{\altform_{L^2}^1}
\newcommand{\cformnorm}[1]{\left\Vert#1\right\Vert_{\cform}}
\newcommand{\altnorm}[1]{\left\Vert#1\right\Vert_{\altform}}
\newcommand{\diralg}{\mathcal{C}}
\newcommand{\red}[1]{\textcolor{red}{#1}}
\title{Differential forms for  fractal subspaces and finite energy coordinates}
\author[Kelleher]{Daniel J. Kelleher}
\address{Department of Mathematics, Purdue University, West Lafayette, IN 47907-2067 USA}
\email{dkellehe@purdue.edu}
\thanks{Research supported in part by NSF grant DMS-0505622}
\begin{document}

\begin{abstract}

This paper introduces a notion of differential forms on closed, potentially fractal, subsets of the $\R^m$ by defining pointwise cotangent spaces using the restriction of $C^1$ functions to this set. Aspects of cohomology are developed: it is shown that the differential forms are a Banach algebra and it is possible to integrate these forms along rectifiable paths.
These definitions are connected to the theory of differential forms on Dirichlet spaces by considering fractals with finite energy coordinates. In this situation, the $C^1$ differential forms project onto the space of Dirichlet differential forms.
Further, it is shown that if the intrinsic metric of a Dirichlet form is a length space, then the image of any rectifiable path through a finite energy coordinate sequence is also rectifiable. The example of the harmonic Sierpinski gasket is worked out in detail.
\end{abstract}
\maketitle

\section{Introduction}

Considering a closed, potentially fractal, subset of $\R^m$, we define cotangent spaces as the quotients of $C^1$ functions on the space. We refer to these as $C^1$ differentials. While this is a standard construction when considering subvarieties with algebraic, smooth or analytic functions,  the current work makes use of functions which have lower regularity: only assuming one continuous derivative, and we take an arbitrary closed subset rather than an affine variety.
Theorem \ref{tkdef} establishes the equivalence of three definitions of these cotangent spaces.
In section \ref{sec:DiffForms}, $C^1$ differential forms are defined. The main result of this section is that the 1-dimensional differential forms are a Banach algebra when added to the scalar forms (functions).

In the classical setting, it is natural to define differential 1-forms as field which can be integrated along curves. This is a particular advantage of the present work --- Section 4 proves that it is possible to integrate $C^1$ differential forms along rectifiable curves which remain in the closed subset.  
\cite{CGI+13} discusses the possibility of defining integrals of differential forms on the Sierpinski gasket along curves. 
Another method, which considers integration on fractal curves is \cite{Har99}.

In section 5, if the subset has a suitable measure, it is shown that the direct integral of the cotangent bundles can be taken to define $L^2$ differential forms on the space which contain the $C^1$ forms. Theorem \ref{derham} uses integration along curves  to prove that the exterior derivative is a closed operator with respect to this direct integral structure on these $L^2$ forms.

Recently there have emerged many points of view and techniques aimed at understanding  differential structures on metric measure spaces. Works including \cite{CS07,IRT12} develope differential forms on fractals from energy measures on Dirichlet spaces, which we shall refer to as Dirichlet differential forms. This constrcution allows for the study of more general differential equations on fractal spaces.  For example, one can construct magnetic Schr\"odinger operators, as in \cite{HT14,HR16,magnets}, allowing for the rigorous mathematical study of physical objects from \cite{DABK83,Bel92}. For one-dimensional fractals Hodge theory is defined in \cite{HT14}, and \cite{HKT13} defines a Dirac operator and spectral triples with these fractals.  Navier-Stokes equations are also studied in \cite{HT14,HRT}.

Section 6 discusses the relationship between the $C^1$ differential forms and those defined for Dirichlet Spaces which have finite energy coordinates. One of the main result of this section is to prove that there is a closed projection from the $C^1$ differential forms defined to the Dirichlet differential forms. Subsection \ref{harmsg} gives the details of this relationship in the special case of harmonic coordinates on the Sierpinski Gasket as defined in \cite{Kaj12,Kaj13,Kig93-2,Kig08}.

The work in \cite{Gig15} constructs first and second order differential structures for metric spaces which satisfy Ricci curvatures lower bounds. The works \cite{BSSS12,SS12,Sma15}, construct abstract versions of Hodge--DeRham and Alexander--Spanier cohomologies for use on metric spaces. A large part of the motivation is to understand data sets by there global structure which is determined by these cohomologies.


There is a strong relationship between geometry of a metric measure space and Dirichlet forms on the space. This is  discussed at length in \cite{Stu94,Sto10,HKT12}, where intrinsic metrics induced by the Dirichlet space are proven to be geodesic metrics in the sense that the distance is given by the length of the shortest path between two points. Theorem \ref{reccurves} of the current work proves that rectifiable curves on our fractal (with respect to the intrinsic metric) have rectifiable (with respect to Euclidean distance) images through our coordinates.

Areas of interest for further research would be extending these results to infinite dimensional spaces. This would allow for the study of Dirichlet forms with infinite coordinate sequences, as was considered in previous works \cite{Hin10}. Further, one could consider sub-Riemannian spaces, as considered in \cite{GL14}.

\noindent

\section{Definitions of Cotangent spaces for closed subsets}

Consider $U$ be an open subset of $\R^m$, $C_0(\overline U)$ shall denote the set of continuous functions on the closure of $U$ vanishing at infinity, and $C^1(U)$ will denote the continuous functions which have continuous first-order partial derivatives. Define the following norm on $C^1_0(U) := C^1(U)\cap C_0(\overline{U})$
\[
\conenorm{u}:= \norm{u}_\infty +\sum_{i=1}^m\norm{\frac{\partial u}{\partial x^i}}_\infty
\]
where
\[
\norm{u}_\infty = \sup_{x\in U} |u(x)|
\]
and $\set{x^i}_{i=1}^m$ are the coordinates of $\R^m$. It is elementary to prove

\begin{prop}
$C^1_0(U)$ is a commutative Banach algebra with pointwise addition, multiplication, and the norm $\conenorm{.}$. If $U$ has compact closure, then $C^1_0(U)$ has multiplicative identity $1_U$.
\end{prop}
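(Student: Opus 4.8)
The plan is to verify the normed-algebra axioms one at a time and then establish completeness, the last being the only step with genuine content. That $C^1_0(U)$ is a linear subspace of $C_0(\overline U)$ is immediate: the partial derivatives depend linearly on the function, so sums and scalar multiples of $C^1$ functions are again $C^1$, and sums and scalar multiples of functions vanishing at infinity still vanish at infinity. That $\conenorm{\cdot}$ is a norm follows termwise from the corresponding properties of $\norm{\cdot}_\infty$; in particular $\conenorm{u}=0$ forces $\norm{u}_\infty=0$, hence $u\equiv0$. Commutativity is automatic since the product is pointwise.

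For the multiplicative structure, if $u,v\in C^1_0(U)$ the product rule gives $\tfrac{\partial(uv)}{\partial x^i}=u\,\tfrac{\partial v}{\partial x^i}+v\,\tfrac{\partial u}{\partial x^i}$, which is continuous, so $uv\in C^1(U)$; and $\abs{uv}\le\norm{u}_\infty\abs{v}$ shows $uv$ vanishes at infinity, so $uv\in C^1_0(U)$. Estimating each summand of $\conenorm{uv}$ and regrouping gives
\[
\conenorm{uv}\le\norm{u}_\infty\norm{v}_\infty+\sum_{i=1}^m\norm{u}_\infty\norm{\tfrac{\partial v}{\partial x^i}}_\infty+\sum_{i=1}^m\norm{v}_\infty\norm{\tfrac{\partial u}{\partial x^i}}_\infty\le\conenorm{u}\,\conenorm{v},
\]
the last step being the expansion of $\conenorm{u}\,\conenorm{v}$ minus the nonnegative term $\paren{\sum_i\norm{\tfrac{\partial u}{\partial x^i}}_\infty}\paren{\sum_i\norm{\tfrac{\partial v}{\partial x^i}}_\infty}$.

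The substantive point is completeness, where the only real obstacle is differentiating under a limit. Given a $\conenorm{\cdot}$-Cauchy sequence $(u_n)$, each of the $m+1$ sequences $(u_n)$ and $\paren{\tfrac{\partial u_n}{\partial x^i}}$ is $\norm{\cdot}_\infty$-Cauchy, so by completeness of $(C_0(\overline U),\norm{\cdot}_\infty)$ they converge uniformly to functions $u,g_1,\dots,g_m\in C_0(\overline U)$. To identify $u$ as a $C^1$ function with $\tfrac{\partial u}{\partial x^i}=g_i$, I would fix $x\in U$ and a segment $\set{x+te_i:\abs{t}\le\delta}\subset U$, write $u_n(x+te_i)-u_n(x)=\int_0^t\tfrac{\partial u_n}{\partial x^i}(x+se_i)\,ds$, and let $n\to\infty$: uniform convergence moves the limit inside the integral, giving $u(x+te_i)-u(x)=\int_0^t g_i(x+se_i)\,ds$, whence $\tfrac{\partial u}{\partial x^i}(x)=g_i(x)$ by the fundamental theorem of calculus since $g_i$ is continuous. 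As this holds at every $x\in U$ and each $g_i$ is continuous, $u\in C^1_0(U)$ and $\conenorm{u_n-u}\to0$. Finally, when $\overline U$ is compact every continuous function on $\overline U$ vanishes at infinity vacuously, so $1_U\in C_0(\overline U)$; it is $C^1$ with vanishing partials, hence $1_U\in C^1_0(U)$, it acts as a multiplicative identity, and $\conenorm{1_U}=1$. (For noncompact $\overline U$ the constant $1$ fails to vanish at infinity, which is why the hypothesis is needed.) Every step other than the differentiation-under-the-limit is routine bookkeeping with the sup norm and the product rule.
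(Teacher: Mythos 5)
Your proof is correct and is exactly the elementary verification the paper has in mind (the paper states the result without proof, remarking only that it is elementary): submultiplicativity via the product rule plus the regrouping identity, and completeness via differentiation under a uniform limit along coordinate segments. The only negligible imprecision is that the limits $g_i$ of the derivative sequences need only be taken in the bounded continuous functions on $U$ rather than in $C_0(\overline U)$, since the definition of $C^1_0(U)$ imposes the vanishing-at-infinity condition on $u$ itself but not on its partial derivatives; this does not affect the argument.
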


For a subset $K\subset U$ define
\[
\idea_K = \set{u\in C^1_0(U)~|~u(p) = 0~\text{for all }p\in K}
\]
If $K$ is a relatively closed subset of $U$, then $\idea_K$ is a closed ideal of $C^1_0(U)$. We shall take $\idea_p :=\idea_{\set{p}}$ for a point $p\in U$. 

\begin{prop}
For a given closed $K\subset U$, The space $C^1_0(K) := C^1_0(U)/\idea_K$, is a Banach algebra with the norm
\[
\norm{u}_K := \norm{u}_{\infty,K} + \inf_{v|_K = u|_K}\sum_{i=1}^d\norm{\partial_i v}_{\infty,K}
\]
where $\norm{v}_{\infty,K} = \sup_{p\in K}\abs{v(p)}$.
\end{prop}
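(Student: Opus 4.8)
\emph{The plan.} The statement bundles two things: that $\norm{\cdot}_K$ is a submultiplicative norm on $C^1_0(K)=C^1_0(U)/\idea_K$, and that $C^1_0(K)$ is complete in it. Since the preceding paragraph records that $\idea_K$ is a \emph{closed} ideal of the Banach algebra $C^1_0(U)$, the quotient $C^1_0(U)/\idea_K$ is already a commutative Banach algebra in its quotient norm $\norm{[u]}_q=\inf\set{\conenorm{v}:v-u\in\idea_K}$; the content of the proposition is thus to identify $\norm{\cdot}_K$ as an algebra norm on this same quotient and to prove completeness in it. (Note $\norm{\cdot}_K$ is in general strictly smaller than $\norm{\cdot}_q$, so completeness is not automatic.) First I would check both terms of $\norm{u}_K$ depend only on the coset of $u$: $\norm{u}_{\infty,K}$ manifestly does, and $\inf_{v|_K=u|_K}\sum_i\norm{\partial_i v}_{\infty,K}$ is by construction a function of $u|_K$ alone. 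Positive-definiteness is then immediate, since $\norm{u}_K=0$ forces $\norm{u}_{\infty,K}=0$, i.e.\ $u\in\idea_K$; homogeneity is clear.

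For the triangle inequality and submultiplicativity I would argue with near-optimal representatives. Given $u_1,u_2$ and $\eps>0$, choose $v_j\in C^1_0(U)$ with $v_j|_K=u_j|_K$ and $\sum_i\norm{\partial_i v_j}_{\infty,K}\le D_j+\eps$, where $D_j$ denotes the defining infimum for $u_j$. Then $v_1+v_2$ represents $u_1+u_2$ and $v_1v_2$ represents $u_1u_2$, so $\sum_i\norm{\partial_i(v_1+v_2)}_{\infty,K}\le D_1+D_2+2\eps$, yielding subadditivity of the second term as $\eps\to0$; and the Leibniz rule $\partial_i(v_1v_2)=v_1\partial_iv_2+v_2\partial_iv_1$ together with $\norm{v_j}_{\infty,K}=\norm{u_j}_{\infty,K}$ gives $\sum_i\norm{\partial_i(v_1v_2)}_{\infty,K}\le\norm{u_1}_{\infty,K}(D_2+\eps)+\norm{u_2}_{\infty,K}(D_1+\eps)$. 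Combining with $\norm{u_1u_2}_{\infty,K}\le\norm{u_1}_{\infty,K}\norm{u_2}_{\infty,K}$ and letting $\eps\to0$ gives $\norm{u_1u_2}_K\le\norm{u_1}_K\norm{u_2}_K$. When $\overline U$ is compact one checks additionally that $\norm{1_U}_K=1$ (take $v=1_U$), so $C^1_0(K)$ inherits the multiplicative identity.

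The main obstacle is completeness. Let $([u_n])$ be $\norm{\cdot}_K$-Cauchy. The $\norm{\cdot}_{\infty,K}$-part shows $u_n|_K\to g$ uniformly on $K$ for some continuous $g$, and the second part shows the ``gradients of $u_n$ along $K$'' are Cauchy in the appropriate infimum sense; the crux is to produce a single $u\in C^1_0(U)$ with $u|_K=g$ for which $\inf_{v|_K=(u_n-u)|_K}\sum_i\norm{\partial_i v}_{\infty,K}\to0$. My plan is to show $\norm{\cdot}_K$ is equivalent to the quotient norm $\norm{\cdot}_q$, which (as $\idea_K$ is closed) makes $\norm{\cdot}_K$ complete. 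One inequality, $\norm{u}_K\le\norm{u}_q$, is immediate because replacing a supremum over $U$ by a supremum over $K\subset U$ never increases a norm. For the reverse, starting from a near-optimal representative $v$ for the derivative infimum, I would first compose with a smooth $1$-Lipschitz $\psi\colon\R\to\R$ equal to the identity on $[-\norm{u}_{\infty,K},\norm{u}_{\infty,K}]$ and bounded by $\norm{u}_{\infty,K}+\eps$: this preserves the coset, forces $\norm{\psi\circ v}_\infty\le\norm{u}_{\infty,K}+\eps$, and does not increase any $\abs{\partial_i v}$ pointwise; then multiply by a cutoff that is $1$ on a neighbourhood of $K$ and supported where each $\abs{\partial_i v}$ stays within $\eps$ of its supremum over $K$, and estimate the global $C^1_0(U)$-norm of the product via Leibniz. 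The delicate point — and where I expect the real difficulty, together with a use of compactness of $K$ (which holds for the fractals of interest) — is bounding the gradient of this cutoff uniformly in $u$, since the neighbourhood on which $v$ has nearly extremal derivatives depends on $v$; fixing in advance a neighbourhood basis of $K$ with uniformly bounded cutoffs and absorbing the resulting excess into the $\norm{\cdot}_{\infty,K}$-term completes the argument.
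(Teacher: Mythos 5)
Your first two paragraphs are correct and are in fact more careful than the paper: the paper's entire proof is the single sentence that ``$\idea_K$ is a closed ideal of $C^1_0(U)$ and this norm is the quotient norm of $C^1_0(U)/\idea_K$'' --- that is, it asserts outright the identification you rightly flag as the delicate point, and derives everything from the standard closed-ideal fact. Your direct verification that $\norm{\cdot}_K$ descends to cosets and is a submultiplicative algebra norm is sound as written.

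The gap is in the completeness step, and it cannot be closed along the route you propose: for general closed $K$ the norm $\norm{\cdot}_K$ is \emph{not} equivalent to the quotient norm $\norm{\cdot}_q$, so neither your plan nor the paper's one-liner goes through. Take $U\subset\R$ an open interval containing $[0,1]$ and $K=\set{0}\cup\set{1/n \,:\, n\geq 1}$, which is compact. For each $N$ let $u_N\in C^1_0(U)$ be a bump supported in $\paren{1/(N+1),\,1/(N-1)}$ with $u_N(1/N)=1$ and with $u_N'=0$ at every point of $K$. Then $\norm{u_N}_K=1$ for all $N$, since $u_N$ itself witnesses that the derivative infimum over $K$ is $0$; but any $v$ with $v|_K=u_N|_K$ climbs from $0$ at $1/(N+1)$ to $1$ at $1/N$, so the mean value theorem forces $\sup_U\abs{v'}\geq N(N+1)$ and hence $\norm{[u_N]}_q\geq N(N+1)$. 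Since $\norm{\cdot}_K\leq\norm{\cdot}_q$ always and $\norm{\cdot}_q$ is complete, the open mapping theorem (two comparable complete norms are equivalent) shows $(C^1_0(K),\norm{\cdot}_K)$ cannot be complete for this $K$. This is exactly the failure mode you anticipated: the cutoff between $1/(N+1)$ and $1/N$ must have derivative of order $N^2$, and compactness of $K$ does not rescue the uniform bound. The statement becomes correct if the displayed norm is replaced by the honest quotient norm $\inf_{v|_K=u|_K}\paren{\norm{v}_\infty+\sum_{i}\norm{\partial_i v}_\infty}$ with suprema over $U$ (then everything follows as in your opening paragraph); with suprema over $K$ it needs an additional extension hypothesis on $K$ that neither you nor the paper supplies.
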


\begin{proof}
$\idea_K$ is a closed ideal of $C^1_0(U)$ with respect to the norm above and this norm is the quotient norm of $C^1_0(U)/\idea_K$.
\end{proof}

\begin{rmk}
 We interpret this to mean that every function in an equivalence class of $C^1_0(K)$ takes the same values on $K$, so we tend to think of elements in $C^1_0(K)$ as restrictions of elements in $C^1_0(U)$.
\end{rmk}

For $f\in C^1(U)$ denote the classical gradient $\nabla f:U\to \R^m$ as  $\nabla f:= \paren{\frac{\partial f}{\partial x^1},\ldots,\frac{\partial f}{\partial x^m}}$ for all $f\in C^1(U)$.

\begin{prop}\label{prop:ip2}
If $f$ is a $C^1_0(U)$ with $f(p) = 0$ and $\nabla f(p) = 0$, then there is $g_1\in C(U)$ and $g_2(p)\in \idea_p$ such that $f( x) = g_1( x)g_2( x)+g_0( x)$ where $g_1(p) = g_2(p) = 0$, $g_1\in C^1(U\setminus\set{p})$, and $g_0$ is constant in a neighborhood of $p$. If $\overline U$ is compact, we can take $g_0\equiv 0$.
\end{prop}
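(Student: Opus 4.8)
The plan is to exhibit an explicit factorization; after a translation we may assume $p=0$, and the whole difficulty is local near $0$. Fix $\delta>0$ with $\overline{B(0,3\delta)}\subset U$. For $|x|<3\delta$ the segment $[0,x]$ lies in $U$, so $f(x)=\int_0^1\nabla f(tx)\cdot x\,dt$ and therefore $|f(x)|\le|x|\,\omega(|x|)$, where
\[
\omega(r):=r+\sup\{\,|\nabla f(y)|:y\in U,\ |y|\le r\,\}.
\]
Since $\nabla f$ is continuous with $\nabla f(0)=0$, the function $\omega$ is continuous and non-decreasing on $[0,3\delta)$, with $\omega(0)=0$ and $\omega(r)>0$ for $r>0$.

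The heart of the argument is to build a $C^1$ ``size function'' near $0$. For $0<r\le\delta$ set $\mu(r):=\tfrac1r\int_0^r\sqrt{\omega(2s)}\,ds$, $\mu(0):=0$, and $\rho(x):=|x|\,\mu(|x|)$ for $|x|\le\delta$. Because $\omega$ is non-decreasing one gets $\tfrac12\sqrt{\omega(r)}\le\mu(r)\le\sqrt{\omega(2r)}$, so $\mu(r)\to0$ as $r\to0$ and $\mu>0$ on $(0,\delta]$; also $r\mu'(r)=\sqrt{\omega(2r)}-\mu(r)\in[0,\sqrt{\omega(2r)}]$, so $r\mu'(r)\to0$ as $r\to0$. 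Writing $\rho(x)=M(|x|)$ with $M(r)=r\mu(r)$, we have $M\in C^1([0,\delta])$, $M(0)=M'(0)=0$, $M'(r)=\mu(r)+r\mu'(r)\to0$; hence $\rho\in C^1(B(0,\delta))$ with $\rho(0)=0$, $\nabla\rho(0)=0$, and $\rho>0$ on $B(0,\delta)\setminus\{0\}$. Finally, by the Taylor estimate,
\[
\frac{|f(x)|}{\rho(x)}\le\frac{\omega(|x|)}{\mu(|x|)}\le 2\sqrt{\omega(|x|)}\ \longrightarrow\ 0\qquad(x\to0).
\]

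To globalize, choose a smooth $\psi$ with $0\le\psi\le1$, $\psi\equiv1$ near $0$, and support in $B(0,\delta)$, and choose $\vartheta\in C^1_0(U)$ with $\vartheta>0$ on $U$ (take $\vartheta\equiv1$ if $\overline U$ is compact, and $\vartheta(x)=(1+|x|^2)^{-1}$ otherwise). Put $g_2:=\psi\rho+(1-\psi)\vartheta$. Then $g_2\in C^1_0(U)$, $g_2(0)=0$ so $g_2\in\idea_0$, $\nabla g_2(0)=0$ since $g_2=\rho$ near $0$, and $g_2>0$ on $U\setminus\{0\}$ (where $\psi=1$ it equals $\rho>0$; where $\psi<1$ the term $(1-\psi)\vartheta$ is positive). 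Set $g_1:=f/g_2$ on $U\setminus\{0\}$ and $g_1(0):=0$; since $g_2\in C^1$ vanishes only at $0$, $g_1\in C^1(U\setminus\{0\})$, and near $0$ the estimate above gives $|g_1(x)|\le 2\sqrt{\omega(|x|)}\to0$, so $g_1\in C(U)$ with $g_1(0)=0$. Because $g_1g_2=f$ on $U\setminus\{0\}$ and $g_1(0)g_2(0)=0=f(0)$, one has $f=g_1g_2$ on all of $U$, so $g_0\equiv0$ works (in particular $g_0$ is constant near $p$) --- whether or not $\overline U$ is compact.

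The only genuinely delicate step is the construction of $\rho$: a $C^1$ function with an isolated zero at $0$ must vanish there to order strictly greater than one (hence $\nabla\rho(0)=0$), whereas $f$ vanishes essentially only to order ``$1+\varepsilon$''; thus $g_2=|x|\mu(|x|)$ must have $\mu$ large enough to force $f/g_2\to0$ and yet tame enough ($r\mu'(r)\to0$) to keep $g_2$ of class $C^1$ at $0$. The averaged square root $\mu(r)=\tfrac1r\int_0^r\sqrt{\omega(2s)}\,ds$ is exactly what threads this needle, since $\mu(r)$ and $r\mu'(r)$ are both controlled by $\sqrt{\omega}$ while $\mu(r)\gtrsim\sqrt{\omega(r)}\gg\omega(r)$. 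The Taylor estimate, the verification that a radial $M(|x|)$ with $M(0)=M'(0)=0$ is $C^1$, and the cutoff patching are all routine.
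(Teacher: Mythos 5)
Your argument is correct. It shares the paper's core idea --- since $|f(x)|=o(|x-p|)$, factor $f$ near $p$ as a product of two functions vanishing at $p$ by splitting off a square root of the decay rate --- but the execution is genuinely different. The paper's proof takes $g_1=\sqrt{h(|x|)}$ with $h$ a $C^1$ increasing interpolant of the values $\sup_{|x|\le 1/(n-1)}|f(x)|/|x|$, and $g_2=f/\sqrt{h(|x|)}$, so the delicate point there is that the quotient $f/\sqrt{h}$ must be shown to be $C^1$; it also begins with a partition of unity, which is the source of the term $g_0$. You reverse the division of labour: your $C^1$ factor is the explicit radial function $\rho(x)=\int_0^{|x|}\sqrt{\omega(2s)}\,ds$ built from the modulus of continuity of $\nabla f$ via the mean value inequality $|f(x)|\le |x|\,\omega(|x|)$, for which $\rho\in C^1$, $\nabla\rho(p)=0$, and $\rho>0$ off $p$ are immediate (your two-sided bound $\tfrac12\sqrt{\omega(r)}\le\mu(r)\le\sqrt{\omega(2r)}$ and the identity $r\mu'(r)=\sqrt{\omega(2r)}-\mu(r)$ are exactly the needed calibration), while the quotient $g_1=f/\rho$ only has to be continuous, which follows from $|f|/\rho\le 2\sqrt{\omega(|x|)}\to 0$. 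What this buys is: a fully explicit majorant in place of the paper's ``interpolate with a $C^1$ function'' step; an easier regularity verification, since one checks that a radial primitive is $C^1$ rather than that $f$ divided by something is $C^1$; and, via the global positive patch $g_2=\psi\rho+(1-\psi)\vartheta$, the stronger conclusion $g_0\equiv 0$ even when $\overline U$ is not compact. The paper's route is shorter on the page but leaves more to the reader at precisely the $C^1$ step.
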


\begin{proof}
By a partition of unity argument, it follows that $f$ is the sum of a function compactly supported in a neighborhood around $p$ and a function which is $0$ in a neighborhood of $p$. Thus we assume that $f$ is compactly supported around $p$ without losing generality. Further, we may assume that $p=0$.

Because $\lim_{x\to p} |f(x)|/|x| = 0$, we can find an increasing $C^1((0,N))\cap C([0,N))$  function $h(t)>\sup_{|{\bf x}|\leq t} |f(x)|/| x|$ and $h(0) = 0$. To construct this function we define $h(1/n) = \sup_{ x \leq 1/(n-1)}|f({ x})|/|{ x}|$ and interpolate with a $C^1$ function. With this $g_1({ x}) = (h(|{x}|))^{1/2}$ and 
\[
g_2({ x}) = \begin{cases}
f({ x})/(h(|{ x}|))^{1/2} & \text{if }{ x}\neq p \\
0 & \text{if } { x} = p,
\end{cases}
\]
which is continuous because
\[
\lim_{{ x} \to p} \frac{f({ x})}{(h(|{ x}|))^{1/2}} \leq \lim_{{ x} \to p}\paren{|{ x}|f({ x})}^{1/2} = 0 
\]
Clearly $g_1$ is in $C(U)\cap C^1(U\setminus\set{p})$, and 
\[
\lim_{{ x} \to p} \frac{|f({ x})|}{|{ x}|(h(|{ x}|))^{1/2}} \leq \lim_{{ x} \to p} \paren{\frac{|f({ x})|}{|{ x}|}}^{1/2}  = 0.
\]
so $g_2$ is in $C^1(U)$ and vanishes at $p$.
\end{proof}

We define
\[
\idea_{2,p} := \operatorname{clos}(\idea_p^2, \conenorm{.}),
\]
that is, the closure of the square of the ideal $\idea_p^2$ with respect to the norm $\conenorm{.}$.

\begin{cor}
The ideal $\idea_{2,p}$ consists  the set of functions of the form $g_1g_2+g_0$ where $g_0\in C_1$  is equivalently $0$ in a neighborhood of $p$, and $g_1(p) = g_2(p) = 0$ with $g_1\in C(U)\cap C^1(U\setminus\set{p})$and $g_2\in C^1_0(U)$.
\end{cor}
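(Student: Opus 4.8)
The plan is to prove the corollary by establishing both inclusions, using Proposition \ref{prop:ip2} together with a density argument to identify the closure $\idea_{2,p}$. Throughout I would work with a single representative vanishing at $p$, and where convenient reduce to $p=0$ and (by the partition-of-unity trick already used in the proof of Proposition \ref{prop:ip2}) to functions supported near $p$; the $g_0$ term is exactly the bookkeeping device that absorbs the part of a function living away from $p$, and the "compact $\overline U$" normalization kills it.

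First I would show the easy inclusion: any $f$ of the stated form $g_1g_2+g_0$ lies in $\idea_{2,p}$. For the $g_0$ piece this is immediate since a function vanishing in a neighborhood of $p$ is a product of two functions in $\idea_p$ (write it as $\chi \cdot f$ with $\chi\in\idea_p$ equal to $1$ on $\supp g_0$). For the $g_1g_2$ piece, since $g_1\in C(U)\cap C^1(U\setminus\{p\})$ with $g_1(p)=0$ but $g_1$ possibly not $C^1$ at $p$, I would approximate: for small $\delta$ replace $g_1$ by $g_1^{(\delta)}$ which equals $g_1$ outside $B_\delta(p)$ and is smoothly cut down to $0$ on $B_{\delta/2}(p)$, chosen so that $\conenorm{g_1^{(\delta)}-g_1}\to 0$ — here one uses that $\sup_{|x|\le\delta}|g_1(x)|\to 0$ controls the sup-norm error and the construction via $h$ can be arranged so the derivative terms are controlled as well; then $g_1^{(\delta)}g_2\in\idea_p^2$ (both factors are genuinely $C^1_0$ and vanish at $p$), and $g_1^{(\delta)}g_2\to g_1g_2$ in $\conenorm{\cdot}$ because $g_2\in C^1_0(U)$ is a fixed multiplier on the Banach algebra. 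Hence $g_1g_2\in\idea_{2,p}$.

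Next, the reverse inclusion: every element of $\idea_{2,p}$ has the stated form. Start from $f\in\idea_p^2$, say $f=\sum_{k=1}^n a_k b_k$ with $a_k,b_k\in\idea_p$; then $f(p)=0$ and, by the product rule, $\nabla f(p)=\sum_k(a_k(p)\nabla b_k(p)+b_k(p)\nabla a_k(p))=0$, so Proposition \ref{prop:ip2} applies and gives $f$ in the required form (with $g_1\in C\cap C^1(U\setminus\{p\})$, $g_2\in C^1_0$, $g_0$ locally constant near $p$, and $g_0\equiv0$ if $\overline U$ is compact). The point is then to show this form is preserved under $\conenorm{\cdot}$-limits. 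Given $f_j\to f$ with each $f_j$ of the stated form (or just $f_j\in\idea_p^2$), I note $f_j(p)=0$ and $\nabla f_j(p)=0$ pass to the limit by uniform convergence of the functions and of the partial derivatives, so $f$ itself satisfies the hypotheses of Proposition \ref{prop:ip2} and therefore has the stated form directly. This actually makes the argument cleaner: rather than tracking the decompositions along the sequence, I would prove that $\idea_{2,p}\subseteq\{f\in C^1_0(U): f(p)=0,\ \nabla f(p)=0\}$ (a $\conenorm{\cdot}$-closed set, since evaluation of $f$ and of $\partial_i f$ at $p$ are continuous functionals on $C^1_0(U)$), and then invoke Proposition \ref{prop:ip2} once on each element of $\idea_{2,p}$.

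The main obstacle is the first (forward) inclusion — specifically, verifying that the cut-off approximants $g_1^{(\delta)}g_2$ converge to $g_1g_2$ in the $C^1$ norm, since $g_1$ need not be differentiable at $p$ and naive cut-offs can blow up the gradient near $p$. The resolution is to exploit the quantitative control built into the construction of $h$ in Proposition \ref{prop:ip2}: because $h(|x|)\to 0$ and $h$ is $C^1$, one can choose the cut-off scale and profile so that both $\norm{g_1^{(\delta)}-g_1}_\infty$ and $\norm{\partial_i(g_1^{(\delta)}-g_1)}_\infty$ are small (the latter because, on the annulus where the cut-off acts, $|g_1|$ is already of order $h(\delta)^{1/2}$, small, so multiplying by a cutoff with derivative $\sim 1/\delta$ can be offset by choosing $h$ to decay fast enough — or, alternatively, by first replacing $g_1$ by $\max(g_1-\eta,0)$-type truncations in the radial variable). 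A secondary, purely cosmetic obstacle is matching the two slightly different ambient algebras ($C_1$ versus $C^1_0(U)$) and the two roles of $g_0$; I would simply remark that when $\overline U$ is compact one takes $g_0\equiv 0$, and otherwise the partition of unity reduces everything to the compactly-supported-near-$p$ case as in Proposition \ref{prop:ip2}.
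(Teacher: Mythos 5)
Your overall architecture is right, and your reverse inclusion is correct and clean: $\idea_p^2$ lies in the set $\set{f\in C^1_0(U)\,:\,f(p)=0,\ \nabla f(p)=0}$ by the product rule, that set is $\conenorm{\cdot}$-closed because evaluation of $f$ and of $\partial_i f$ at $p$ are bounded functionals, so $\idea_{2,p}$ is contained in it, and Proposition \ref{prop:ip2} then produces the stated decomposition for each of its elements. The $g_0$ step of the forward inclusion is also fine (up to a routine exhaustion if $\supp g_0$ is unbounded).

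The gap is in your treatment of the $g_1g_2$ term. The claim $\conenorm{g_1^{(\delta)}-g_1}\to 0$ is false in general; indeed this quantity is typically infinite, since $g_1^{(\delta)}-g_1=(\chi_\delta-1)g_1$ equals $-g_1$ on the ball of radius $\delta/2$ about $p$, where $\nabla g_1$ may be unbounded (take $g_1(x)=|x-p|^{1/2}$, which is exactly the shape $h^{1/2}$ produced in Proposition \ref{prop:ip2}). So the ``fixed multiplier on the Banach algebra'' step has nothing to act on, and neither proposed repair works: $h$ is dictated by the given data rather than freely chosen (and in the corollary $g_1$ is an arbitrary element of $C(U)\cap C^1(U\setminus\set{p})$, not of the form $h^{1/2}$), while a truncation of the type $\max(g_1-\eta,0)$ destroys differentiability. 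The fix is to never approximate $g_1$ alone in the $C^1$ norm, but to estimate the product directly. Note first that membership in $\idea_{2,p}\subset C^1_0(U)$ forces $G:=g_1g_2$ to be $C^1$, with $G(p)=0$ and $\nabla G(p)=0$ because $|G(x)|\le|g_1(x)|\,|g_2(x)|=o(|x-p|)$. Then $g_1^{(\delta)}g_2-g_1g_2=(\chi_\delta-1)G$ is supported in the $\delta$-ball and
\[
\norm{\nabla\bigl((\chi_\delta-1)G\bigr)}_\infty \le \norm{\nabla\chi_\delta}_\infty \sup_{|x-p|\le\delta}|G| + \sup_{|x-p|\le\delta}|\nabla G| \le \frac{C}{\delta}\cdot o(\delta) + o(1) \longrightarrow 0,
\]
since $\sup_{|x-p|\le\delta}|G|\le \sup_{|x-p|\le\delta}|g_1|\cdot O(\delta)=o(\delta)$ and $\nabla G$ is continuous and vanishes at $p$. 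With that substitution your argument closes; the rest of the proposal is sound.
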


\begin{defn}
Define $T_p^*U := \idea_p/\idea_{2,p}$, and $\mathcal K_p$ the span of a compactly supported smooth function $\phi$ which is constant $1$ in a neighborhood of $p$. Then $C^1_0(U) = \mathcal K_p \oplus T_p^* U \oplus \idea_{2,p}$, and thus $T_p^*U$ is an $m$-dimensional vector space. The natural projection from $d_p: C^1(U) \to T^*_pU$ as the exterior derivative. Note that $d_p$ does not depend on our choice of $\phi$, because if $\psi$ was another such function, then $\phi-\psi\in \idea_{2,p}$.
\end{defn}

There is a unique decomponsition of $f$ into a sum of elements from $\mathcal K_p$, $T^*_pU$ and $\idea_{2,p}$ given by the Taylor expansion
\[
f(x) = f(p) \phi(x) + (x-p)\cdot \nabla f(p) +o(|x-p|^2).
\]

\begin{prop}
$d_p(fg) = f(p)d_p g + g(p) d_p f$ and 
\[
d_pf = \frac{\partial f}{\partial x^i} d_p x^i.
\]
\end{prop}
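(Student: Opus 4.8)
The plan is to read off both identities from the Taylor decomposition recorded immediately above the statement, using the Corollary to Proposition~\ref{prop:ip2} to identify the remainder term; I would prove the coordinate formula first and then obtain the Leibniz rule from the classical product rule. Fix the cutoff $\phi$ defining $\mathcal K_p$ and set $\xi^i:=(x^i-p^i)\phi$, which lies in $C^1_0(U)$ since it is compactly supported; by construction $d_px^i$ is the class of $\xi^i$ in $T^*_pU=\idea_p/\idea_{2,p}$, and since $\phi\equiv 1$ near $p$ we have $\xi^i(p)=0$, $\nabla\xi^i(p)=e_i$, and $\nabla\phi(p)=0$. (When $\overline U$ is compact one may take $\phi\equiv 1_U$ and $\xi^i=x^i-p^i$.) Given $f\in C^1_0(U)$, put
\[
r:=f-f(p)\phi-\sum_{i=1}^m\frac{\partial f}{\partial x^i}(p)\,\xi^i\in C^1_0(U).
\]
Then $r(p)=0$ and $\nabla r(p)=\nabla f(p)-0-\nabla f(p)=0$, so $r\in\idea_{2,p}$ by the Corollary to Proposition~\ref{prop:ip2}. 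Applying $d_p$, which annihilates $\mathcal K_p$ and $\idea_{2,p}$, to this identity yields $d_pf=\sum_i\frac{\partial f}{\partial x^i}(p)\,d_p\xi^i=\sum_i\frac{\partial f}{\partial x^i}(p)\,d_px^i$, the asserted formula (with the convention that the coefficient $\partial f/\partial x^i$ is evaluated at $p$ and that repeated indices are summed); in particular the $d_px^i$ span $T^*_pU$.

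For the product rule I would apply the coordinate formula to $fg$ and invoke the ordinary Leibniz rule for partial derivatives, $\partial_i(fg)(p)=f(p)\,\partial_ig(p)+g(p)\,\partial_if(p)$:
\[
d_p(fg)=\sum_i\partial_i(fg)(p)\,d_px^i=f(p)\sum_i\partial_ig(p)\,d_px^i+g(p)\sum_i\partial_if(p)\,d_px^i=f(p)\,d_pg+g(p)\,d_pf.
\]
Alternatively one can argue purely algebraically: writing $f=f(p)\phi+a+u$ and $g=g(p)\phi+b+v$ with $a,b\in\spn\{\xi^i\}$ and $u,v\in\idea_{2,p}$, expand $fg$ and observe that every term other than $f(p)\phi b$ and $g(p)\phi a$ lies in $\mathcal K_p+\idea_{2,p}$ --- using that $\idea_{2,p}$ is a closed ideal, that $ab\in\idea_p^2\subseteq\idea_{2,p}$, and that $\phi^2-\phi$ and $(\phi-1)\xi^i$ vanish identically near $p$ --- while $d_p(f(p)\phi b)=f(p)\,d_pg$ and $d_p(g(p)\phi a)=g(p)\,d_pf$ because $\phi\xi^i\equiv\xi^i\pmod{\idea_{2,p}}$.

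The only delicate point is the bookkeeping around the cutoff $\phi$: one must check that $\nabla\phi(p)=0$, that localizing the coordinate functions does not change classes modulo $\idea_{2,p}$ (i.e.\ that $(\phi-1)\xi^i\in\idea_{2,p}$), and that the linear Taylor coefficients are exactly $\partial_if(p)$, which amounts to matching gradients at $p$. Everything else is a direct computation inside the decomposition $C^1_0(U)=\mathcal K_p\oplus T^*_pU\oplus\idea_{2,p}$.
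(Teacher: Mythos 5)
Your proof is correct and rests on the same idea as the paper's: everything is read off the Taylor decomposition $C^1_0(U)=\mathcal K_p\oplus T^*_pU\oplus\idea_{2,p}$, with Proposition~\ref{prop:ip2} and its corollary identifying the remainder as an element of $\idea_{2,p}$. The only cosmetic difference is ordering --- you establish the coordinate formula first and then get the Leibniz rule from the classical product rule for $\partial_i(fg)$, while the paper multiplies the two Taylor expansions directly; your ``alternative'' algebraic argument is exactly the paper's proof.
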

\begin{rmk}
This is classical, of course, but the following proof exhibits thinking which is useful herein.
\end{rmk}

\begin{proof}
Assuming, without loss of generality, $p=0$, the direct product decomposition (Taylor's theorem) implies that,  if $[f]$ and $[g]$ are the equivalence classes of $f,g\in C^1_0(U)$ mod $\idea_{2,p}$,
\[
[f( x)] = f(p)\phi(x) +  x \cdot \nabla f(p) + \idea_{2,p} \quad\text{and}\quad [g( x)] = g(p)\phi(x) + x \cdot \nabla g(p) + \idea_{2,p}
\]
where $\phi$ is the bump function mentioned above. The multiplying we discover
\[
[g( x)f( x)] = f(p)g(p)\phi(x) + \phi(x)(g(p)\paren{ x \cdot \nabla f(p)} + f(p)\paren{ x \cdot \nabla g(p)}) + \idea_{2,p}
\]
noting that $\phi^2 = \phi$ modulo $\idea_{2,p}$ and that $\phi$ is equivalently 1 in a neighborhood of $p$.
\end{proof}

Now, fixing a closed $K\subset U$, we define 
\[
C_0^1(K) : = C_0^1(U) / \idea_K
\]
Further, we define $\idea_p(K) = \idea_p/(\idea_K\cap \idea_p)$, alternatively 
\[
\idea_p(K) = \begin{cases}
\idea_p/\idea_K & \text{if }p\in K\\
\set0 & \text{if } p\notin K.
\end{cases}
\]
Note that this is a subspace of $C^1_0(K)$, and is a Banach algebra with the inherited norm. We define 
\[
\idea_{2,p}(K) := \idea_{2,p}/\idea_K\cap \idea_{2,p} \cong (\idea_{2,p}+\idea_K)/\idea_K,
\]
noting that both are Banach algebras, because $\idea_K$ and $\idea_{2,p}$ are both closed, so their intersection is also a closed ideal. This notation allows us to define $T_p^*K = \idea_p(K)/\idea_{2,p}(K)$. The homomorphism theorems for rings provides the following equivalent definitions.

\begin{thm}\label{tkdef}
Using the notation above,
\[
T^*_pK := \idea_p(K)/\idea_{2,p}(K)\cong \idea_p/(\idea_{2,p}+\idea_K) \cong T^*_pU/d_p(\idea_K).
\]
\end{thm}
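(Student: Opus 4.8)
Both isomorphisms are instances of the third isomorphism theorem $(C/A)/(B/A)\cong C/B$ for $A\subseteq B\subseteq C$, and essentially the only point requiring attention is that the subspaces in play are closed, so that every quotient is a Banach space and the isomorphisms are topological. I would first dispose of the case $p\notin K$, where $\idea_p(K)=\set0$ by definition, hence $T^*_pK=\set0$. Since $K$ is closed and $p\notin K$, choose $g\in C^1_0(U)$ with $g(p)=1$ and $g|_K\equiv0$ (a smooth bump supported in a small ball about $p$ disjoint from $K$ and from $\partial U$); then $1=(1-g)+g$ with $1-g\in\idea_p$, $g\in\idea_K$, so for any $f\in\idea_p$ we get $f=f(1-g)+fg\in\idea_p^2+\idea_K\subseteq\idea_{2,p}+\idea_K$. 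Thus $\idea_p=\idea_{2,p}+\idea_K$, so $\idea_p/(\idea_{2,p}+\idea_K)=\set0$; and applying $d_p$, which annihilates $\idea_{2,p}$ and carries $\idea_p$ onto $T^*_pU$, we get $T^*_pU=d_p(\idea_p)=d_p(\idea_K)$, so $T^*_pU/d_p(\idea_K)=\set0$ too.

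Now let $p\in K$, so $\idea_K\subseteq\idea_p$ and $\idea_{2,p}\subseteq\idea_p$, whence $\idea_{2,p}+\idea_K$ is an ideal with $\idea_K\subseteq\idea_{2,p}+\idea_K\subseteq\idea_p$. It is closed: $\idea_{2,p}$ has finite codimension in $\idea_p$ (namely $\dim T^*_pU=m$), so $\idea_{2,p}+\idea_K$ is the preimage in $\idea_p$ of a subspace of the finite-dimensional space $\idea_p/\idea_{2,p}=T^*_pU$, hence closed; similarly $\idea_K\cap\idea_{2,p}$ is closed. With the identifications $\idea_p(K)=\idea_p/\idea_K$ and $\idea_{2,p}(K)\cong(\idea_{2,p}+\idea_K)/\idea_K$ recorded above, the third isomorphism theorem for Banach spaces yields
\[
T^*_pK=\idea_p(K)/\idea_{2,p}(K)\cong\big(\idea_p/\idea_K\big)\big/\big((\idea_{2,p}+\idea_K)/\idea_K\big)\cong\idea_p/(\idea_{2,p}+\idea_K),
\]
and because $\idea_K\subseteq\idea_{2,p}+\idea_K$ the quotient norms match, so this is isometric.

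For the remaining isomorphism, I would identify $d_p(\idea_K)$ inside $T^*_pU=\idea_p/\idea_{2,p}$. Since $p\in K$ forces $f(p)=0$ for $f\in\idea_K$, the $\mathcal K_p$-component in the Taylor decomposition of such $f$ is zero, so $d_pf$ is simply the class of $f$ in $\idea_p/\idea_{2,p}$; that is, $d_p|_{\idea_p}$ is the canonical quotient map $\idea_p\to\idea_p/\idea_{2,p}$, and hence $d_p(\idea_K)=(\idea_K+\idea_{2,p})/\idea_{2,p}$, a closed subspace of $T^*_pU$. One more application of the third isomorphism theorem gives
\[
T^*_pU/d_p(\idea_K)=\big(\idea_p/\idea_{2,p}\big)\big/\big((\idea_K+\idea_{2,p})/\idea_{2,p}\big)\cong\idea_p/(\idea_K+\idea_{2,p}),
\]
which is exactly the middle term, closing the chain. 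I do not anticipate a real obstacle: the content is entirely the isomorphism theorems plus the bookkeeping of which maps are restrictions of quotient maps or of $d_p$. The only subtlety, and the sole place topology enters, is the closedness of $\idea_{2,p}+\idea_K$ in $\idea_p$, which is why the finite-dimensionality of $T^*_pU$ proved earlier is used.
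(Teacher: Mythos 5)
Your proof is correct and follows the same route the paper intends: the paper's entire justification is the one-line appeal to ``the homomorphism theorems for rings,'' and your two applications of the third isomorphism theorem, together with the closedness of $\idea_{2,p}+\idea_K$ via finite codimension of $\idea_{2,p}$ in $\idea_p$, supply exactly the details that appeal leaves implicit. The separate treatment of $p\notin K$ and the verification that the isomorphisms are topological are welcome additions the paper omits.
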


Thus 
\begin{align*}
C^1_0(U)&  = \mathcal K_p \oplus T^*_pK \oplus (\idea_{2,p} + \idea_K) \\
& = \mathcal K_p \oplus T^*_pK \oplus \idea_{2,p}(K) \oplus \idea_K
\end{align*}
and thus 
\begin{align*}
C^1_0(K) = \mathcal K_p \oplus T^*_pK \oplus \idea_{2,p}(K).
\end{align*}
and thus we define the differential $d_p^K: C^1_0(K) \to T^*_pK$ by the natural projection associated with the above decomposition. If we define $\rho_p: T^*_pU\to T_p^*K$ and $\sigma: C^1_0(U)\to C^1_0(K)$ to be the natural projections, then $d_p^K\circ \sigma = \rho_p\circ d_p$, i.e. the following diagram commutes
\[
\begin{CD}
C^1_0(U) @>d_p>> T^*_pU\\
@VV\sigma V @VV\rho V\\
C^1_0(K) @> d^K_p >> T^*_pK
\end{CD}
\]

This  commutative diagram implies the following.
\begin{prop}
$d^K_p (fg) = f(p)d^K_p g + g(p) d^K_p f$. 
\end{prop}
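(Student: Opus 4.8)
The plan is to obtain the identity formally from the Leibniz rule for $d_p$ on $C^1_0(U)$ established above, by a diagram chase through the commuting square $d^K_p\circ\sigma=\rho_p\circ d_p$.

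First I would dispose of the trivial case: if $p\notin K$ then $T^*_pK=\set0$ by definition, so both sides vanish and there is nothing to prove; hence assume $p\in K$. I would then record two elementary facts. First, $\sigma:C^1_0(U)\to C^1_0(K)$ is a surjective algebra homomorphism, being the quotient by the closed ideal $\idea_K$; so if $\tilde f,\tilde g\in C^1_0(U)$ are any lifts of $f,g\in C^1_0(K)$ (i.e. $\sigma(\tilde f)=f$, $\sigma(\tilde g)=g$), then $\sigma(\tilde f\tilde g)=fg$. Second, by the Remark following the construction of $C^1_0(K)$, every representative of a given class agrees on all of $K$, so evaluation at $p$ descends to a well-defined functional on $C^1_0(K)$, and $\tilde f(p)=f(p)$, $\tilde g(p)=g(p)$ for any such lifts.

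The chase itself is then immediate. Applying the already-proved Leibniz rule to the lifts gives $d_p(\tilde f\tilde g)=\tilde f(p)\,d_p\tilde g+\tilde g(p)\,d_p\tilde f$ in $T^*_pU$. Applying the linear map $\rho_p$ and using $\rho_p\circ d_p=d^K_p\circ\sigma$ together with $\sigma(\tilde f\tilde g)=fg$, I obtain
\[
d^K_p(fg)=\rho_p\big(d_p(\tilde f\tilde g)\big)=\tilde f(p)\,\rho_p(d_p\tilde g)+\tilde g(p)\,\rho_p(d_p\tilde f)=f(p)\,d^K_p g+g(p)\,d^K_p f,
\]
which is the claim.

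I do not anticipate any real obstacle: the substance is entirely contained in the given commuting diagram and in the fact that $\sigma$ is multiplicative. The only subtlety worth a sentence is that ``$f(p)$'' must be read as the common value on $K$ of the representatives of $f$, which is legitimate precisely because of the identification of $C^1_0(K)$ with a space of functions on $K$ noted in the Remark above.
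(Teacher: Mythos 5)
Your proof is correct and is exactly the argument the paper intends: the paper gives no written proof beyond the remark that the identity follows from the commuting square $d^K_p\circ\sigma=\rho_p\circ d_p$ together with the Leibniz rule already established for $d_p$ on $C^1_0(U)$, which is precisely the diagram chase you carry out. Your explicit attention to the well-definedness of evaluation at $p$ on $C^1_0(K)$ is a reasonable (if minor) addition.
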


\section{Definition of differential forms}\label{sec:DiffForms}

\begin{defn}
Assuming $T_p^*U$ has the standard norm, for  $\omega = \sum_{i=1}^m \omega_idx^i$, then $\norm{\omega}_p^2 = \sum_{i=1}^m \omega^2_i(p)$. Since $T_p^*K$ is a quotient space of $T_p^*U$, then we define the quotient norm for any equivalence class $[\omega]\in T_p^*K$ by
\[
\norm{[\omega]}_{T_p^*K} = \inf_{\eta\in[\omega]} \sqrt{\sum_{i=1}^m \eta_i^2}, \quad \text{for}\quad\eta = \sum_{i=1}^m\eta_idx^i.
\]

Similarly, assuming that $T^*_pU$ has the standard inner product $\gen{\eta_i dx^i,\omega_i dx^i}_p= \sum_{i=1}^m \eta_i\omega_i$,  we give $T^*_pK$ the standard quotient inner product: $\pkgen{[\eta],[\omega]} = \gen{\tilde\eta,\tilde\omega}$, where $\tilde\eta,\tilde\omega\in (d\idea_K)^\perp$ are the unique representative of $[\eta],[\omega]$ respectively.
\end{defn}

\begin{defn}
On the other hand if $T_pU$ is the tangent space at a point $p$, define the tangent space with respect to $K$ at a point $p$
\[
T_pK = \set{X\in T_pU~|~Xf = 0~\forall f\in \idea_K}.
\]
$T_pK$ is a subspace of $T_pU$, and we shall define $P_p:T_pU \to T_pK$ to be the orthogonal projection (with respect to the dot product). Since the definition is equivalent to being the set of $X\in T_pU$ such that $\omega X = 0$ for all $\omega \in d\idea_K$,  $T_pK$ is the dual of $T_p^*K$ and visa versa.
\end{defn}

If we take $\sharp:T_p^*U \to T_pU$ to be the standard musical operator such that $\sharp(\sum \omega_i dx^i) = \sum \omega_i\pdxi$ then we have that the following diagram commutes
\[
\begin{CD}
T^*_pU @>\sharp>> T_pU\\
@VV\rho V @VVP V\\
T^*_pK @> \sharp >> T_pK
\end{CD}
\]
i.e. $P\sharp = \sharp \rho$, and $\norm{[\omega]}_{T^*_pK} = \norm{P\sharp\omega}$.

Define the cotangent bundle of $U$ to be $T^*U = \coprod_{p\in U} T^*_pU$ and  $\cform(U)$ to be the continuous sections of $T^*U$ which have fiberwise bounded norms vanishing at infinity, i.e. maps from $U \to T^*U$ of the form $p \mapsto \sum_{i=1}^m g_i(p)d_px^i$ for $g_i\in C_0(\overline U)$, which we will denote $\sum_{i=1}^m g_idx^i$. Elements of $\cform(U)$ can also be interpreted as continuous functions from $U$ to $\R^m$. 

\begin{prop}
$\cform(U)$ is a Banach space with the norm
\[
\cformnorm{\omega} = \sup_{p\in U} \norm{\omega_p}_p=\sup_{p\in U}\sqrt{\omega_1^2+\omega_2^2+\cdots+\omega_m^2},\quad\text{for}\quad \omega = \omega_idx^i.
\]
\end{prop}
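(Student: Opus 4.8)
The plan is to identify $\cform(U)$, via the obvious linear bijection $\omega = \sum_{i=1}^m \omega_i dx^i \leftrightarrow (\omega_1,\dots,\omega_m)$, with the finite product $C_0(\overline U)^m$ equipped with an equivalent norm, and then invoke the completeness of $C_0(\overline U)$.

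First I would check that $\cformnorm{\cdot}$ is a norm. Nonnegativity and homogeneity are immediate, and definiteness holds because $\cformnorm{\omega}=0$ forces each coefficient function $\omega_i$ to vanish identically on $U$. For the triangle inequality, fix $p\in U$ and apply the Euclidean triangle inequality in $T_p^*U$ together with the defining suprema: $\norm{\omega_p+\eta_p}_p \le \norm{\omega_p}_p + \norm{\eta_p}_p \le \cformnorm{\omega}+\cformnorm{\eta}$; taking the supremum over $p\in U$ gives $\cformnorm{\omega+\eta}\le\cformnorm{\omega}+\cformnorm{\eta}$. Next, applying the elementary inequalities $|a_j|\le \sqrt{\sum_{i=1}^m a_i^2}\le \sqrt{m}\,\max_i|a_i|$ with $a_i=\omega_i(p)$ and then taking the supremum over $p$ yields
\[
\max_{1\le i\le m}\norm{\omega_i}_{\infty} \;\le\; \cformnorm{\omega} \;\le\; \sqrt{m}\,\max_{1\le i\le m}\norm{\omega_i}_{\infty},
\]
so $\cformnorm{\cdot}$ is equivalent to the product norm on $C_0(\overline U)^m$ under the above bijection.

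Finally, for completeness: given a $\cformnorm{\cdot}$-Cauchy sequence $\paren{\omega^{(n)}}_n$ with $\omega^{(n)}=\sum_{i=1}^m \omega_i^{(n)} dx^i$, the left-hand inequality shows that each coordinate sequence $\paren{\omega_i^{(n)}}_n$ is Cauchy in $\paren{C_0(\overline U),\norm{\cdot}_\infty}$. Since $C_0(\overline U)$ is a Banach space, there are limits $\omega_i\in C_0(\overline U)$ with $\omega_i^{(n)}\to\omega_i$ uniformly. Setting $\omega=\sum_{i=1}^m \omega_i dx^i$, we have $\omega\in\cform(U)$ (its coefficients are continuous and vanish at infinity), and the right-hand inequality gives $\cformnorm{\omega^{(n)}-\omega}\le \sqrt{m}\,\max_i\norm{\omega_i^{(n)}-\omega_i}_\infty \to 0$, so $\omega^{(n)}\to\omega$ in $\cform(U)$. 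I do not expect a genuine obstacle here; the only point worth a word of care is that the limiting coefficients again lie in $C_0(\overline U)$, which is precisely the standard completeness of $C_0(\overline U)$ being quoted, and the equivalence of norms that reduces the whole statement to it.
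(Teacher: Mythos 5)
Your proof is correct, and it is exactly the standard argument the paper implicitly relies on: the paper states this proposition without proof, since by its own definition $\cform(U)$ is precisely the set of maps $p\mapsto\sum_i g_i(p)\,d_px^i$ with $g_i\in C_0(\overline U)$, so the identification with $C_0(\overline U)^m$ under an equivalent norm is immediate. Your verification of the norm axioms, the two-sided comparison with $\max_i\norm{\omega_i}_\infty$, and the coordinatewise completeness argument are all sound and complete.
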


Similarly, define $TU = \coprod_{p\in U} T_pU$ to be the tangent bundle. We shall use $\sharp$ to refer to the musical isomorphism between $T^*U$ and $TU$.

We shall define the cotangent bundle of $K$ to be $T^*K= \coprod_{p\in U} T^*_pK$, and $\cform(K)$ to be the sections of $T^*K$ which are of the form $p\mapsto\rho_p\omega_p$ where $\omega\in \cform(U)$. That is we shall consider the maps $p \mapsto \sum_{i} g_i(p) d^K_p\tilde x^i$, where $g_i\in C(K)$, and $\tilde x^i$ is the equivalence class of $x^i$ in $C^1_0(K)$. Two forms $\omega$ and $\eta$ are equal, if $\omega_p-\eta_p$ are in $d \idea_p$.

Define $\rho: \cform(U) \to \cform(K)$ to be defined fibre-wise as the projection $\rho_p:T_p^*U \to T_p^*K$. Since convergence with respect to the sup norm on $\cform(U)$ implies pointwise convergence, it follows that that $\ker\rho$ is a closed subspace of $\cform(U)$. 

We can define the norm on $\cform(K)$ by
\[
\norm{\omega}_{\cform(K)} = \inf_{\eta \sim \omega} \norm{\eta}_{\coneform(U)}.
\]

\begin{prop}
$\cform(K)$ is a Banach space with the norm $\norm{.}_{\cform(K)}$.
\end{prop}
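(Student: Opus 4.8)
The plan is to identify $\cform(K)$, together with the norm $\norm{\cdot}_{\cform(K)}$, as the quotient of the Banach space $\cform(U)$ by the closed subspace $\ker\rho$, and then to quote (or briefly reprove) the fact that a quotient of a Banach space by a closed subspace is itself a Banach space.

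First recall that $\cform(U)$ is a Banach space by the proposition above, and that $\ker\rho$ is a closed subspace of $\cform(U)$, as was observed just before the statement (sup-norm convergence forces pointwise convergence, and each $\rho_p$ is continuous). By the very definition of $\cform(K)$ as the set of sections $p\mapsto\rho_p\omega_p$ with $\omega\in\cform(U)$, the map $\rho\colon\cform(U)\to\cform(K)$ is onto, so it descends to a linear bijection $\bar\rho\colon\cform(U)/\ker\rho\to\cform(K)$. I would then check that $\bar\rho$ is an isometry: for $\eta\in\cform(U)$ a section $\zeta$ satisfies $\zeta\sim\rho(\eta)$ exactly when $\rho(\zeta)=\rho(\eta)$, i.e.\ $\zeta-\eta\in\ker\rho$; hence
\[
\norm{\rho(\eta)}_{\cform(K)}=\inf_{\zeta\sim\rho(\eta)}\cformnorm{\zeta}=\inf_{\xi\in\ker\rho}\cformnorm{\eta+\xi},
\]
which is precisely the quotient norm of the coset $\eta+\ker\rho$. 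Because $\ker\rho$ is closed this is a genuine norm and not merely a seminorm.

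It remains to establish completeness. The quickest route is the standard fact that a quotient of a Banach space by a closed subspace is complete; if a self-contained argument is wanted, use the absolutely-convergent-series criterion: given $\omega^{(n)}\in\cform(K)$ with $\sum_n\norm{\omega^{(n)}}_{\cform(K)}<\infty$, use the definition of the infimum to choose representatives $\eta^{(n)}\in\cform(U)$ with $\cformnorm{\eta^{(n)}}\le\norm{\omega^{(n)}}_{\cform(K)}+2^{-n}$; then $\sum_n\eta^{(n)}$ converges in the Banach space $\cform(U)$ to some $\eta$, and since $\norm{\rho(\zeta)}_{\cform(K)}\le\cformnorm{\zeta}$ directly from the definition of the norm, $\rho(\eta)=\sum_n\omega^{(n)}$ in $\cform(K)$. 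Equivalently one may take a Cauchy sequence in $\cform(K)$, pass to a subsequence with consecutive differences of norm $<2^{-n}$, lift these differences to $\cform(U)$ with norm $<2^{-n}$, sum them there, and push the sum forward by $\rho$.

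There is no genuine obstacle here; the only point needing a little care is the lifting step — that every element of $\cform(K)$ has, for each $\eps>0$, a representative in $\cform(U)$ whose norm exceeds $\norm{\cdot}_{\cform(K)}$ by at most $\eps$ — and this is immediate from the definition of the infimum. The rest is bookkeeping matching the concrete description of $\cform(K)$ (sections of $T^*K$ modulo fibrewise $\ker\rho_p$) with the quotient $\cform(U)/\ker\rho$.
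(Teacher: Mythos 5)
Your proof is correct and follows essentially the same route the paper intends: identify $\cform(K)$ with the quotient $\cform(U)/\ker\rho$, use that $\ker\rho$ is closed (noted in the paper just before the statement) to get a genuine norm, and conclude completeness by the standard quotient-of-a-Banach-space argument. If anything you are more thorough than the paper, whose written proof only verifies definiteness of the norm (that $\norm{\omega}_{\cform(K)}=0$ forces $\omega\sim 0$ and conversely) and leaves the completeness step implicit.
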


\begin{proof}
First, we claim that the this is a well defined norm on $\cform(K)$, and in particular that $\norm{\omega}_{\cform(K)}= 0$ if and only if $\omega_p\in d\idea_{p,2}$ for all $p\in K$. If $\norm{\omega}_{\cform(K)} =0$ this implies that $\omega \sim 0$ and thus $\omega_p \in d\idea_{p,2}$ for all $p\in K$.

On the other hand, let $\omega = \sum \omega_i(p) d^K_px^i$ is such that $\omega_p \in d\idea_{p,2}$. Because for $\omega_i\in C_0(K)$ it can be extended to a function in $\tilde\omega^i\in C_0(U)$, and defining $\tilde\omega = \sum \tilde\omega^i(p) dx^i$ is equivalent to $0$. Hence $\norm{\omega}_{\cform(K)}=0$.
\end{proof}

%

On the other hand define 
\[
\cocform(K) = \set{PX~|~X\in \cocform(U)}
\]
where $\cocform(U)$ is the set of continuous vector fields on $U$, i.e. $X= \sum X^i\pdxi$ where $X^i\in C_0^1(\overline U)$, and $(PX )_p=P_pX_p$. We can consider $\sharp$ as a map from $\cform(K)$ to $\cocform(K)$, and this allows for the characterization
\[
\cformnorm{\omega}=  \sup_p \norm{P\sharp\omega}.
\]

\begin{prop}
With fiberwise multiplication, $C_0(K)$ acts on $\cform(K)$ by bounded operators. In particular $\cformnorm{f\omega} \leq \norm{f}_\infty\cformnorm{\omega}$. Similarly, if $f\in C^1_0(K)$, then defining $f\omega = \tilde f \omega$ to be fiberwise multiplication by any representative $\tilde f\in C^1_0(U)$ of $f$, then $ \cformnorm{f\omega}\leq \conenorm{f}\cformnorm{\omega}$.
\end{prop}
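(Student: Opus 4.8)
The plan is to run the entire argument fibrewise, using that by definition the norm on $T^*_pK$ is the quotient norm of the standard norm on $T^*_pU$ under the projection $\rho_p: T^*_pU\to T^*_pK$. First I would make the action precise. Given $f\in C_0(K)$, choose an extension $\tilde f\in C_0(\overline U)$ of $f$ (possible since $C_0(K)$ is a quotient of $C_0(\overline U)$, or by the Tietze extension theorem) and a representative $\tilde\omega\in\cform(U)$ of $\omega\in\cform(K)$, and set $f\omega:=\rho(\tilde f\tilde\omega)$. Because $\tilde f$ is bounded and continuous, $p\mapsto\tilde f(p)\tilde\omega_p$ is a continuous section of $T^*U$ with $\norm{\tilde f(p)\tilde\omega_p}_p=\abs{\tilde f(p)}\,\norm{\tilde\omega_p}_p\le\norm{\tilde f}_\infty\,\norm{\tilde\omega_p}_p$, so it lies in $\cform(U)$ and hence $f\omega\in\cform(K)$. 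Independence of the chosen extension of $f$ and of $\tilde\omega$ is just linearity of each $\rho_p$: $(f\omega)_p=\rho_p(\tilde f(p)\tilde\omega_p)=f(p)\,\rho_p\tilde\omega_p=f(p)\,\omega_p$, which depends only on $f|_K$ and on $\omega$, since $\ker\rho_p$ is a linear subspace and hence stable under the scalar $f(p)$. Bilinearity and the module identities $(fg)\omega=f(g\omega)$ and (when $1\in C_0(K)$) $1\,\omega=\omega$ are then immediate pointwise.

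For the bound, I would use that the quotient norm on $T^*_pK$ is absolutely homogeneous, so $\norm{(f\omega)_p}_{T^*_pK}=\abs{f(p)}\,\norm{\omega_p}_{T^*_pK}$, and then
\[
\cformnorm{f\omega}=\sup_{p\in K}\abs{f(p)}\,\norm{\omega_p}_{T^*_pK}\le\norm f_\infty\sup_{p\in K}\norm{\omega_p}_{T^*_pK}=\norm f_\infty\,\cformnorm{\omega}.
\]
Thus each $\omega\mapsto f\omega$ is bounded with operator norm at most $\norm f_\infty$, and $f\mapsto(\omega\mapsto f\omega)$ is a norm-decreasing algebra homomorphism from $C_0(K)$ into the bounded operators on $\cform(K)$. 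Equivalently, one can read this off the identity $\cformnorm{\omega}=\sup_p\norm{P\sharp\omega}$ together with $P\sharp(f\omega)=f\,P\sharp\omega$.

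For the $C^1$ statement, observe that a representative $\tilde f\in C_0^1(U)$ of $f\in C_0^1(K)=C_0^1(U)/\idea_K$ is already an admissible extension in the sense above, so the same construction and computation apply and give $\cformnorm{f\omega}\le\sup_{p\in K}\abs{\tilde f(p)}\,\cformnorm{\omega}\le\conenorm{\tilde f}\,\cformnorm{\omega}$, using $\sup_{p\in K}\abs{\tilde f(p)}\le\norm{\tilde f}_\infty\le\conenorm{\tilde f}$; taking the infimum over representatives $\tilde f$ even yields the sharper bound with $\norm f_K$ in place of $\conenorm{\tilde f}$. All of the computations are routine; the only point that needs care is the well-definedness in the first step --- that $f\omega$ does not depend on the chosen extension or representative of $f$, nor on $\tilde\omega$ --- and, as noted, this is simply linearity of the fibrewise projections $\rho_p$. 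So I do not expect a genuine obstacle here: the proposition is essentially a bookkeeping consequence of the quotient-norm definition of $\cform(K)$.
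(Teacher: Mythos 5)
Your argument is correct, and the paper in fact states this proposition with no proof at all, so there is nothing to diverge from; what you wrote is the routine verification the author evidently had in mind. The only point worth flagging is that your main computation uses the fibrewise characterization $\cformnorm{\omega}=\sup_{p\in K}\norm{\omega_p}_{T^*_pK}$ (equivalently $\sup_p\norm{P\sharp\omega}$), which the paper asserts but also does not prove; your alternative route via the quotient norm $\inf_{\eta\sim\omega}\norm{\eta}_{\coneform(U)}$, choosing a sup-norm-preserving extension $\tilde f$ of $f$, avoids relying on that identity and is the safer version of the argument.
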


Now, if we define $\altform(U) = C^1_0(U) \oplus \cform(U)$ then we have can define a multiplication
\[
(f_1,\omega_1)(f_2,\omega_2) = (f_1f_2,f_1\omega_2+f_2\omega_1).
\]
\begin{thm}
$\altform(U)$ is a Banach algebra with the norm
\[
\altnorm{(f,\omega)}  = \conenorm{f}+\cformnorm{\omega}.
\]
\end{thm}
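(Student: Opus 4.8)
The plan is to check the Banach-algebra axioms one at a time, the only point with any real content being submultiplicativity of $\altnorm{\cdot}$. Since $\altform(U)=C^1_0(U)\oplus\cform(U)$ is equipped with the sum of the norms of its two summands, and both $C^1_0(U)$ and $\cform(U)$ are complete (the former a Banach algebra, the latter a Banach space, by the earlier propositions), $\altform(U)$ is complete as well: a sequence in it is Cauchy exactly when both coordinate sequences are, and it then converges coordinatewise. Bilinearity and commutativity of the product $(f_1,\omega_1)(f_2,\omega_2)=(f_1f_2,\,f_1\omega_2+f_2\omega_1)$ are immediate from the commutativity of $C^1_0(U)$ and the bilinearity of the module action of $C_0(U)$ on $\cform(U)$. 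Associativity is a one-line computation: both $\big((f_1,\omega_1)(f_2,\omega_2)\big)(f_3,\omega_3)$ and $(f_1,\omega_1)\big((f_2,\omega_2)(f_3,\omega_3)\big)$ expand to $\big(f_1f_2f_3,\ f_2f_3\,\omega_1+f_1f_3\,\omega_2+f_1f_2\,\omega_3\big)$.

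The heart of the argument is the norm estimate. I would begin with
\[
\altnorm{(f_1,\omega_1)(f_2,\omega_2)}=\conenorm{f_1f_2}+\cformnorm{f_1\omega_2+f_2\omega_1}.
\]
The first summand is at most $\conenorm{f_1}\conenorm{f_2}$ because $C^1_0(U)$ is a Banach algebra. For the second, apply the triangle inequality in $\cform(U)$ together with the module bound from the proposition just preceding the theorem, in the form $\cformnorm{f\omega}\le\norm{f}_\infty\cformnorm{\omega}\le\conenorm{f}\cformnorm{\omega}$ (on $U$ this is just the fiberwise identity $\norm{f(p)\omega_p}_p=|f(p)|\,\norm{\omega_p}_p$), obtaining $\cformnorm{f_1\omega_2+f_2\omega_1}\le\conenorm{f_1}\cformnorm{\omega_2}+\conenorm{f_2}\cformnorm{\omega_1}$. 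Summing the two bounds gives
\[
\altnorm{(f_1,\omega_1)(f_2,\omega_2)}\le\conenorm{f_1}\conenorm{f_2}+\conenorm{f_1}\cformnorm{\omega_2}+\conenorm{f_2}\cformnorm{\omega_1},
\]
and the right-hand side differs from $\big(\conenorm{f_1}+\cformnorm{\omega_1}\big)\big(\conenorm{f_2}+\cformnorm{\omega_2}\big)=\altnorm{(f_1,\omega_1)}\,\altnorm{(f_2,\omega_2)}$ only by the nonnegative term $\cformnorm{\omega_1}\cformnorm{\omega_2}$, so submultiplicativity holds. Finally, if $\overline U$ is compact I would record (mirroring the first proposition of Section 2) that $(1_U,0)$ is a two-sided multiplicative identity, since $(1_U,0)(f,\omega)=(f,\omega)$.

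I do not expect a genuine obstacle here: everything rests on the two facts already established — that $C^1_0(U)$ is a Banach algebra and that multiplication by a $C^1_0$ function is a bounded operator on $\cform(U)$. The only place demanding a moment's care is that the module bound was stated for $\cform(K)$ with $K$ an arbitrary closed set and the quotient norm; I would either specialize it to $K=U$, where the quotient is trivial and the inequality is elementary, or invoke the fiberwise inequality directly as above.
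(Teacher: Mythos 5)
Your proof is correct and follows essentially the same route as the paper's: completeness comes from the direct sum of the two Banach spaces, and submultiplicativity comes from combining the Banach-algebra inequality on $C^1_0(U)$ with the module bound $\cformnorm{f\omega}\leq\conenorm{f}\cformnorm{\omega}$ and the triangle inequality, discarding the extra nonnegative term $\cformnorm{\omega_1}\cformnorm{\omega_2}$. Your additional checks (associativity, commutativity, the unit when $\overline U$ is compact) are fine but not part of the paper's argument, which records only the norm estimate.
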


\begin{proof}
 $\altform(U)$ is a Banach space because $C^1_0(U)$ and $\cform(U)$ are both Banach spaces with their norms. To see the algebra condition

\begin{align*}
\altnorm{(f_1,\omega_1)(f_2,\omega_2)} & = \conenorm{f_1f_2} + \cformnorm{f_1\omega_1+f_2\omega_1}\\
                     & \leq \conenorm{f_1}\conenorm{f_2} +\conenorm{f_1}\cformnorm{\omega_1}+\conenorm{f_2}\cformnorm{\omega_1} \\ 
                     & \leq \altnorm{(f_1,\omega_1)}\altnorm{(f_2,\omega_2)}.
\end{align*}
\end{proof}

\section{Homology and Cohomology}


It is important to note that if $\gamma = (\gamma_1,\ldots,\gamma_m):[a,b]\to U$ is a curve which is differentiable at $t_0$, then $\gamma$ induces an element of $\dot\gamma(t_0)\in T_pU$, where $\gamma(t_0)=p$ in a standard way
\[
\dot\gamma(t_0) f= \frac d{dt}(f\circ \gamma)(t_0) =\paren{\frac{d\gamma_1}{dt},\ldots,\frac{d\gamma_m}{dt}}\cdot \nabla f.
\]

\begin{prop}
Say $\gamma: [a,b]\to K$ is a continuous curve which is differentiable at time $t=t_0$ such that  $\gamma(t_0) = p\in K$, then $\dot\gamma(t_0)\in T_pK$, i.e. if $f$ and $g$ are elements of $C^1_0(K)$ such that $d_p^K f = d_p^K g$, then
\[
\frac{d}{dt}f\circ \gamma(t_0)=\frac{d}{dt}g\circ \gamma(t_0).
\]  
\end{prop}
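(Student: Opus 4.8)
The plan is to establish the displayed geometric statement, $\dot\gamma(t_0)\in T_pK$, and then to note that the ``i.e.'' reformulation in terms of $f$ and $g$ is exactly what it unwinds to, given the duality between $T_pK$ and $T_p^*K$.

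First I would prove $\dot\gamma(t_0)\in T_pK$. Recall $T_pK=\{X\in T_pU : Xu=0 \text{ for all } u\in\idea_K\}$ and that $\dot\gamma(t_0)\in T_pU\cong\R^m$ is the ordinary velocity vector, acting on $v\in C^1_0(U)$ by $\dot\gamma(t_0)v=\frac{d}{dt}(v\circ\gamma)(t_0)=\nabla v(p)\cdot\dot\gamma(t_0)$. The key point — and the only place the hypothesis $\gamma([a,b])\subseteq K$ is used — is that for $u\in\idea_K$ the function $u\circ\gamma$ is identically zero on $[a,b]$, because $u$ vanishes on all of $K$ while $\gamma$ takes values in $K$; hence $\dot\gamma(t_0)u=0$ for every $u\in\idea_K$, which is precisely $\dot\gamma(t_0)\in T_pK$.

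For the reformulation, given $f,g\in C^1_0(K)$ with $d_p^K f=d_p^K g$ I would fix representatives $\tilde f,\tilde g\in C^1_0(U)$. Since $\gamma$ lands in $K$, $f\circ\gamma=\tilde f\circ\gamma$ and $g\circ\gamma=\tilde g\circ\gamma$, so the chain rule gives $\frac{d}{dt}(f\circ\gamma)(t_0)=\nabla\tilde f(p)\cdot\dot\gamma(t_0)$ and likewise for $g$. The commuting square $d_p^K\circ\sigma=\rho_p\circ d_p$ yields $d_p^K f=\rho_p(d_p\tilde f)$ and $d_p^K g=\rho_p(d_p\tilde g)$, so the hypothesis forces $d_p\tilde f-d_p\tilde g\in\ker\rho_p=d_p(\idea_K)$ by Theorem \ref{tkdef}; choosing $u\in\idea_K$ with $d_p\tilde f-d_p\tilde g=d_p u$ and using that $d_p$ extracts the linear term of the Taylor decomposition $C^1_0(U)=\mathcal K_p\oplus T_p^*U\oplus\idea_{2,p}$ (so that $\ker d_p=\{v:\nabla v(p)=0\}$), this is equivalent to $\nabla\tilde f(p)-\nabla\tilde g(p)=\nabla u(p)$. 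Dotting with $\dot\gamma(t_0)$ and applying the first step (so $\nabla u(p)\cdot\dot\gamma(t_0)=0$) gives the claimed equality of derivatives.

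I do not expect a serious obstacle; the difficulty is purely bookkeeping. One must check that for an \emph{equivalence class} $f\in C^1_0(K)$ the composition $f\circ\gamma$ is well-defined and differentiable at $t_0$ — which it is, precisely because all representatives agree on $K\supseteq\gamma([a,b])$ — and one must correctly identify the abstract differential $d_p$ with the classical gradient at $p$. Once these are pinned down, everything reduces to the trivial-seeming but essential observation that a function vanishing on $K$ pulls back to the zero function along any curve contained in $K$, whose derivative is identically $0$.
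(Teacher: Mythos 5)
Your proof is correct and follows essentially the same route as the paper: both arguments reduce the claim to the observation that a representative of the difference decomposes (via Theorem \ref{tkdef}) into a term with vanishing gradient at $p$ plus a term in $\idea_K$, and that the latter pulls back to a constant along any curve contained in $K$, so its derivative at $t_0$ vanishes. Your bookkeeping of representatives and the identification of $d_p$ with the gradient at $p$ are exactly the points the paper's proof also relies on, so there is nothing to add.
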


\begin{proof}
Assuming $f-g \in \idea_{p,2}(K)$, the for any representatives from $C_0^1(U)$, $\tilde f,\tilde g$ respectively, $\tilde f-\tilde g \in \spn (\idea_{2,p}+\idea_K)$, that is $\tilde f-\tilde g = \phi+\psi$ where $\phi\in \idea_{2,p}$ and $\psi$ is constant on $K$, thus $\grad (\tilde f-\tilde g) = \grad \psi$ and
\[
\frac{d}{dt}(f-g)\circ \gamma(t)= \dot\gamma(t) (\tilde f-\tilde g) = \dot \gamma(t) \psi= \frac d{dt}\psi\circ \gamma(t_0) = 0.
\]  
\end{proof}

This fact allows us to integrate differential forms in $T^*K$ along paths which stay in $K$.
%
%
%
\begin{theorem}\label{derham}
For a rectifiable curve $\gamma:[a,b]\to K$, the linear functional from $\cform(K)\to \R$
\[
\omega \mapsto \int_\gamma \omega := \int_\gamma \eta 
\] 
where $\eta\in\cform(U)$ is fiber-wise a representative of $\omega$, is well defined.
\end{theorem}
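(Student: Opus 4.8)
The plan is to show that the quantity $\int_\gamma \eta$ is independent of the choice of representative $\eta \in \cform(U)$ of $\omega \in \cform(K)$. Since the map $\omega \mapsto \int_\gamma\eta$ is manifestly linear in $\eta$ once the integral is shown to make sense, it suffices to prove that $\int_\gamma \eta = 0$ whenever $\eta \in \ker\rho$, i.e.\ whenever $\eta_p \in d_p\idea_K$ (equivalently $\eta_p \in d\idea_{p}$ in the paper's notation, since $\eta \sim 0$ means $\eta_p - 0 \in d\idea_p$) for every $p \in K$. The first preliminary step is to recall that for a rectifiable curve $\gamma$ and a continuous bounded section $\eta = \sum_i \eta_i\, dx^i \in \cform(U)$, the path integral $\int_\gamma \eta := \int_a^b \sum_i \eta_i(\gamma(t))\, d\gamma_i(t)$ is well defined as a Riemann--Stieltjes integral, because $\gamma$ has bounded variation (each component $\gamma_i$ does) and the integrand $t \mapsto \eta_i(\gamma(t))$ is continuous; this is classical and I would just cite it. So the whole content is the representative-independence.

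The key step is to reduce to the differentiable case and invoke the Proposition immediately preceding the theorem. If $\gamma$ were $C^1$ (or piecewise $C^1$), then for $\eta \in \ker\rho$ we would write $\eta = d\tilde\psi$ locally — more precisely, fix $p = \gamma(t_0)$; since $\eta_{p} \in d_{p}\idea_K$, there is $\psi \in \idea_K$ with $d_{p}\psi = \eta_{p}$, hence by the Proposition above, $\dot\gamma(t_0)$ pairs with $\eta_{\gamma(t_0)}$ exactly as it pairs with $d_{\gamma(t_0)}\psi$, and $\frac{d}{dt}(\psi\circ\gamma)(t_0) = \dot\gamma(t_0)\psi = 0$ since $\psi$ vanishes on $K \supseteq \gamma([a,b])$. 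Thus the Riemann--Stieltjes integrand $\sum_i \eta_i(\gamma(t))\,\dot\gamma_i(t)$ vanishes identically, giving $\int_\gamma\eta = 0$. Actually one does not even need $\psi$: the point of the preceding Proposition is precisely that $\dot\gamma(t)$ annihilates every element of $d\idea_K$ wherever $\gamma$ is differentiable, so for $\eta \in \ker\rho$ the integrand vanishes at every differentiability point of $\gamma$.

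The main obstacle is that a general rectifiable curve need not be differentiable anywhere, so one cannot directly pointwise-kill the integrand. To handle this I would use an approximation/parametrization argument: reparametrize $\gamma$ by arclength (with respect to the Euclidean metric, which is legitimate since $\gamma$ is rectifiable), so that $\gamma$ becomes Lipschitz, hence differentiable a.e.\ by Rademacher's theorem, with $\int_\gamma\eta = \int_0^L \sum_i \eta_i(\gamma(s))\,\dot\gamma_i(s)\,ds$. At every differentiability point $s$, the preceding Proposition forces $\sum_i \eta_i(\gamma(s))\,\dot\gamma_i(s) = 0$ when $\eta \in \ker\rho$; since this holds for a.e.\ $s$ and the integrand is bounded, the integral is $0$. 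Combining with linearity, any two representatives of $\omega$ differ by an element of $\ker\rho$ and so give the same integral, which establishes that $\int_\gamma\omega$ is well defined. I would close by remarking that the same computation shows $\big|\int_\gamma\omega\big| \le \cformnorm{\omega}\cdot \ell(\gamma)$ where $\ell(\gamma)$ is the length, so the functional is in fact bounded, though this is not strictly asked for in the statement.
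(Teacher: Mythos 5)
Your proof is correct and follows essentially the same route as the paper: both reduce well-definedness to showing that $\dot\gamma(t)$ annihilates the difference of two representatives, which lies fiberwise in $d\idea_K$, via the Proposition immediately preceding the theorem. In fact you are more careful than the paper, which writes $\int_a^b \dot\gamma(t)\eta\,dt$ as if $\gamma$ were differentiable everywhere; your arclength reparametrization plus Rademacher step supplies the almost-everywhere differentiability that the paper's argument tacitly assumes.
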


\begin{proof}
First, we know that we can chose such an $\eta$ by the definition of $\cform(K)$. Say $\eta$ and $\eta^\circ$ are two representatives of $\omega$, as proven in the proposition above $\dot\gamma(t)(\eta-\eta^\circ) = 0$ for all $t$, thus
\[
\int_\gamma \eta = \int_a^b \dot\gamma(t)\eta \ dt = \int_a^b \dot\gamma(t)\eta^\circ \ dt =  \int_\gamma \eta^\circ.
\] 
\end{proof}

This implies a version of the fundamental theorem of line integrals for the restricted cotangent space.

\begin{thm}\label{ftpi}
For any rectifiable curve $\gamma:[a,b]\to K$ and any function $f\in C^1_0(K)$, $\int_\gamma \ d^K f = f(b) - f(a)$. 
\end{thm}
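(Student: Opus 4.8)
The plan is to reduce this to the classical fundamental theorem of line integrals in $\R^m$. Fix any representative $\tilde f \in C^1_0(U)$ of $f$, so that $\tilde f$ and $f$ agree on $K$. The commuting square $d^K_p \circ \sigma = \rho_p \circ d_p$ from Section 2 shows that the ordinary differential $p \mapsto d_p\tilde f = \sum_i \frac{\partial \tilde f}{\partial x^i}(p)\,d_p x^i$ is a fiberwise representative (under $\rho$) of the section $d^K f \in \cform(K)$. Hence, by Theorem \ref{derham}, $\int_\gamma d^K f$ is well defined and may be computed as $\int_\gamma d\tilde f$, the latter integral taken in $\cform(U)$; in particular its value will not depend on the choice of $\tilde f$.

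It then remains to show $\int_\gamma d\tilde f = f(\gamma(b)) - f(\gamma(a))$, which is the meaning of the right-hand side ``$f(b) - f(a)$'' in the statement. Reparametrize $\gamma$ proportionally to arc length; since $\gamma$ is rectifiable this presents $\gamma$ as a Lipschitz map, so by Rademacher's theorem $\dot\gamma$ exists for a.e.\ $t$ and lies in $L^\infty$, while the line integral is unchanged. The composition $t \mapsto \tilde f(\gamma(t))$ is then absolutely continuous, being a $C^1$ function precomposed with a Lipschitz curve, and the chain rule gives $\frac{d}{dt}\tilde f(\gamma(t)) = \nabla \tilde f(\gamma(t)) \cdot \dot\gamma(t) = \dot\gamma(t)\,d\tilde f$ for a.e.\ $t$. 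The fundamental theorem of calculus for absolutely continuous functions then yields
\[
\int_\gamma d\tilde f = \int_a^b \dot\gamma(t)\,d\tilde f \ dt = \int_a^b \frac{d}{dt}\tilde f(\gamma(t)) \ dt = \tilde f(\gamma(b)) - \tilde f(\gamma(a)),
\]
and since $\gamma(a),\gamma(b) \in K$ and $\tilde f|_K = f$, this equals $f(\gamma(b)) - f(\gamma(a))$.

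The only subtle point is the passage from the formula $\int_\gamma \eta = \int_a^b \dot\gamma(t)\eta\,dt$ (valid whenever $\dot\gamma$ exists a.e.\ and is integrable, as already used in the proof of Theorem \ref{derham}) to an honest telescoping identity: the fundamental theorem of calculus requires $\tilde f\circ\gamma$ to be absolutely continuous, not merely differentiable almost everywhere, and it is precisely the Lipschitz arc-length parametrization that supplies this. Well-definedness of the left-hand side, including its independence of the representative $\tilde f$, is already handled by Theorem \ref{derham}, so no further argument is needed there; everything else is bookkeeping with the quotient identifications of Section 2.
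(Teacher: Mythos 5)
Your proof is correct, and it follows exactly the route the paper intends: the paper states Theorem \ref{ftpi} without proof as an immediate consequence of Theorem \ref{derham}, and your argument simply fills in the implicit details (choose a representative $\tilde f$, invoke well-definedness from Theorem \ref{derham}, and apply the classical fundamental theorem of calculus along the Lipschitz arc-length reparametrization). The observation that absolute continuity of $\tilde f\circ\gamma$ --- not just a.e.\ differentiability --- is what licenses the telescoping identity is a worthwhile point that the paper glosses over.
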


\begin{thm}
If for every two points in $x,y\in K$ there is a rectifiable curve $\gamma: [a,b]\to K$ such that $\gamma(a) = x$ and $\gamma(b) = y$, then $d^K: C^1_0(K) \to \Omega^1_C(K)$, is closed as an operator from $C_0(K) \to\Omega^1_C(K)$, where $C_0(K)$ is the continuous functions vanishing at infinity with the uniform norm.
\end{thm}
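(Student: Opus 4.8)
The plan is to show directly that $d^K$ has closed graph as an operator from $(C_0(K),\|\cdot\|_\infty)$ to $\cform(K)$. So suppose $f_n \in C^1_0(K)$ with $f_n \to f$ uniformly on $K$ and $d^K f_n \to \omega$ in $\cform(K)$; I must produce $g \in C^1_0(K)$ with $g = f$ in $C_0(K)$ and $d^K g = \omega$. The essential tool is Theorem \ref{ftpi}, the fundamental theorem of line integrals, which converts the differential-form data back into pointwise values. First I would fix a basepoint $x_0 \in K$; replacing $f_n$ by $f_n - f_n(x_0)$ (which changes neither $d^K f_n$ nor the uniform convergence up to a constant) I may assume $f_n(x_0) = 0$. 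For any $y \in K$, choose a rectifiable curve $\gamma$ in $K$ from $x_0$ to $y$; then by Theorem \ref{ftpi},
\[
f_n(y) = f_n(y) - f_n(x_0) = \int_\gamma d^K f_n.
\]
Since $d^K f_n \to \omega$ in the sup-norm of $\cform(K)$ and integration along the fixed rectifiable $\gamma$ is a bounded linear functional on $\cform(K)$ (with bound the length of $\gamma$, by Theorem \ref{derham} and the estimate $|\int_\gamma \eta| \le \cformnorm{\eta}\,\mathrm{length}(\gamma)$), the right-hand side converges to $\int_\gamma \omega$. On the other hand the left-hand side converges to $f(y)$ by uniform convergence. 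Hence $f(y) = \int_\gamma \omega$ for all $y \in K$; in particular this integral is independent of the chosen path, so $\omega$ is "exact" in the appropriate sense.

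Next I would use the exactness to build a $C^1_0(U)$ representative. Pick fiberwise a representative $\eta \in \cform(U)$ of $\omega$, so $\eta = \sum_i \eta_i\, dx^i$ with $\eta_i \in C_0(\overline U)$; write $g = f$ as a function on $K$. The identity $f(y) - f(x) = \int_\gamma \eta$ for every rectifiable $\gamma$ in $K$ says precisely that, along curves staying in $K$, $\eta$ is the differential of $g$. To finish I need an honest element of $C^1_0(U)$ whose restriction to $K$ equals $f$ and whose $d_p$, projected by $\rho_p$ into $T^*_pK$, agrees with $\omega_p$; but an arbitrary $f \in C_0(K)$ need not be the restriction of a $C^1$ function, so the cleaner route is to argue that $f$ already lies in $C^1_0(K)$ as an abstract element: the candidate is $g := \int_{x_0}^{(\cdot)} \omega$, and one checks that for each $p \in K$ and each coordinate direction admissible at $p$ the difference quotients of $g$ along curves through $p$ are controlled by $\eta_p$, so that the class $[\eta_p] \in T^*_pK$ is a legitimate value of a differential. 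Concretely, one shows $f - \langle \eta(p), x - p\rangle$ is $o(|x-p|)$ along $K$ near $p$, i.e. lies in $\idea_{2,p}(K)$, giving $d^K_p f = \rho_p \eta_p = \omega_p$, so $f \in C^1_0(K)$ and $d^K f = \omega$.

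The main obstacle is this last step: making rigorous the passage from "$\omega$ integrates correctly along all rectifiable curves in $K$" to "$f$ is differentiable in the $C^1_0(K)$ sense with $d^K f = \omega$." The uniform bound $\cformnorm{d^K f_n - \omega} \to 0$ gives uniform control of the difference quotients, which should yield the $o(|x-p|)$ estimate locally uniformly, but one must be careful that $K$ may be quite sparse near $p$ (so there may be few curves, or none in some directions) — this is exactly why the quotient $T^*_pK$, rather than $T^*_pU$, is the right target, and why the path-connectedness hypothesis is invoked. I would handle it by transferring the estimate through the isometry $\norm{[\omega]}_{T^*_pK} = \norm{P\sharp\omega}$ and the commuting square $P\sharp = \sharp\rho$, reducing everything to the Euclidean difference quotients of the bounded continuous representative $\eta$, where the classical argument applies; the closedness of $\ker\rho$ in $\cform(U)$ then ensures the limiting representative is well defined modulo $\idea_{2,p}$. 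Once $f \in C^1_0(K)$ with $d^K f = \omega$ is established, closedness of the graph follows immediately.
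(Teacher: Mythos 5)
Your first paragraph is, in substance, exactly the paper's own argument: fix a basepoint $x_0$, write $f_n(y)-f_n(x_0)=\int_\gamma d^Kf_n$ by Theorem \ref{ftpi}, note that integration along a fixed rectifiable curve is a bounded functional on $\cform(K)$, and pass to the limit to obtain $f(y)=f(x_0)+\int_\gamma\omega$ independently of the path. The paper's proof stops at precisely this point, tacitly treating the limit $f$ as already lying in $C^1_0(K)$ with $d^Kf=\omega$. You are right to notice that closedness requires more --- one must show that $f$ belongs to the domain $C^1_0(K)$ (i.e.\ is the restriction of a genuine $C^1$ function on $U$) and that $d^K_pf=\omega_p$ in $T^*_pK$ for every $p$ --- and your attempt to supply this is where your proposal goes beyond what the paper actually proves.

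That final step, however, is not established by your sketch, and the difficulty is concrete. From $f(y)-f(x)=\int_\gamma\eta$ and the bound $\abs{\int_\gamma(\eta-\eta(x))}\le\sup_{z\in\gamma}\abs{\eta(z)-\eta(x)}\cdot\operatorname{length}(\gamma)$, the remainder $f(y)-f(x)-\eta(x)\cdot(y-x)$ is controlled only relative to the \emph{length of the connecting curve}, not relative to $\abs{y-x}$. The hypothesis guarantees only that some rectifiable curve joins $x$ to $y$; without a quasiconvexity assumption (length of connecting curves comparable to Euclidean distance, at least locally), the claimed $o(\abs{y-x})$ estimate along $K$ does not follow, and $K$ can be a length space in which nearby points are joined only by long paths. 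Moreover, even granting that estimate, concluding $f\in C^1_0(K)=C^1_0(U)/\idea_K$ requires producing an actual $C^1$ extension of $f$ to $U$ whose differential projects to $\omega$ --- a Whitney-extension step that your argument gestures at but does not carry out. So the portion of your proposal that coincides with the paper's proof is correct; the portion attempting to close the gap the paper leaves open is a reasonable plan, but as written it contains a step that would fail without additional hypotheses.
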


\begin{proof}
Say that $f_i \to f$ and $\omega_i \to \omega$, for $f_i,f\in C^1_0(K)$ and $\omega_i,\omega\in \cform(K)$ with $d^Kf_i = \omega_i$. Pick $x_0\in K$, then, for each $x\in K$ pick $\gamma_x:[a,b] \to K$ with $\gamma_x(a) = x_0$ and $ \gamma_x(b) = x$, then we can define
\[
g(x) = f(x_0)+\int_{\gamma_x} \omega. 
\]
Note that by the theorem \ref{ftpi}, this function is independent of our choice of $\gamma_x$.
It is also clear that
\[
f_i(x) = f_i(x_0) + \int_{\gamma_x}\omega_i
\] 
by the theorem \ref{ftpi}.
Finally, because $\dot\gamma(t)\omega_i  \to \dot\gamma(t)\omega$ uniformly, we have that $f=g$.
\end{proof}

\section{If $K$ is a metric measure space}

In this section, concepts from measurable spaces are introduced to help us understand the underlying space from an intrinsic viewpoint. Assume that $K$ is endowed with $\sigma$-finite Borel regular measure $\mu$.


The subset $TK=\coprod_{p\in K}T_pK$ is closed in $TU$ because if $(x_n,X_n) \to (x,X)$ in $TU$ and $X_nf = 0$ for all $f\in \idea_K$, then $Xf = 0$.
Consider $\pkgen{.,.}$ to be the quotient inner product induced by the the standard metric on $T^*_pU$, and $\norm{.}_{K,p}$ be the associated norm. If $P_p:T_pU \to T_pK$ is taken to be the orthogonal projection onto the tangent space, then if $X_i = \frac{\partial}{\partial x^i}$ is the standard frame for $TU$, then $\norm{dx^i}_{K,p} = \norm{P_p X_i}$.

\begin{lem}
The values  $\pkgen{d^Kx^j,d^Kx^i}$ are measurable on $K$ and hence $T_x^* K$ is a measurable field.
\end{lem}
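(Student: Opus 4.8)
My plan is to reduce the statement to the measurability of one matrix-valued function on $K$ and then to exhibit that function as a pointwise limit of measurable functions.

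First I would rephrase the inner products via the tangent projection $P_p\colon T_pU\to T_pK$ of Section~\ref{sec:DiffForms}. Under the standard identification $T^*_pU\cong\R^m$ given by the frame $\{dx^i\}$, the unique representative of $d^K_px^i$ in $(d_p\idea_K)^\perp$ is $\sharp^{-1}(P_pX_i)$, so from $P\sharp=\sharp\rho$ and the fact that $\sharp$ is an isometry one obtains
\[
\pkgen{d^Kx^j,d^Kx^i}=\gen{P_pX_i,P_pX_j}=\gen{P_pX_i,X_j}\qquad (p\in K),
\]
i.e.\ $\pkgen{d^Kx^j,d^Kx^i}$ is the $(i,j)$ entry of the matrix of $P_p$ in the standard basis. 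Hence it suffices to show that $p\mapsto P_p$ is a $\mu$-measurable map from $K$ into symmetric $m\times m$ matrices.

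Next I would use the concrete description $T_pK=\{X\in T_pU:Xf=0\ \forall f\in\idea_K\}$, which under $T_pU\cong\R^m$ is the orthogonal complement of $V_p:=d_p\idea_K=\{\nabla f(p):f\in\idea_K\}\subseteq\R^m$, so that $P_p=I-\Pi_p$ with $\Pi_p$ the orthogonal projection onto $V_p$. I would choose a countable family $\{f_n\}_{n\in\N}\subseteq\idea_K$ whose gradients span $V_p$ at every $p\in K$; this is possible because $K$, being closed in $\R^m$, is $\sigma$-compact, so $\{\nabla f|_K:f\in\idea_K\}$, exhausted by its restrictions to compacta, is separable in the uniform topology, and a uniformly dense subfamily is pointwise dense at each $p$, hence spans the finite-dimensional $V_p$ there. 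Each $p\mapsto\nabla f_n(p)$ is continuous on $U$, so Borel on $K$.

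The remaining, and only genuinely delicate, point is that $p\mapsto\Pi_p$ is measurable in spite of possible jumps of $\dim V_p$. For fixed $N$ let $M_N(p)$ be the $m\times N$ matrix with columns $\nabla f_1(p),\dots,\nabla f_N(p)$; the orthogonal projection $\Pi^{(N)}(p)$ onto its column space equals
\[
\Pi^{(N)}(p)=\lim_{t\downarrow 0}M_N(p)M_N(p)^{\mathsf T}\bigl(M_N(p)M_N(p)^{\mathsf T}+tI\bigr)^{-1},
\]
a pointwise limit of functions continuous in the entries of $M_N(p)$, so $\Pi^{(N)}$ is measurable on $K$. The subspaces $\lin\{\nabla f_1(p),\dots,\nabla f_N(p)\}$ increase with $N$ inside $\R^m$, hence stabilize at $V_p$ for $N\ge N(p)$; therefore $\Pi^{(N)}(p)\to\Pi_p$ pointwise and $\Pi_p$, so also $P_p=I-\Pi_p$, is measurable. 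Consequently the entries $\pkgen{d^Kx^j,d^Kx^i}$ are measurable, and since $d^Kx^1,\dots,d^Kx^m$ span $T^*_pK$ for every $p\in K$, this finite family is a generating frame with measurable Gram matrix, which is exactly the condition for $\{T^*_xK\}_{x\in K}$ to be a measurable field of Hilbert spaces. I expect the measurable dependence of the projection on the variable-dimensional subspace $V_p$ to be the main obstacle; the separability reduction and the stabilization argument given above are how I would handle it.
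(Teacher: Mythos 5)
Your proof is correct, but it takes a genuinely different route from the paper's. The paper does not construct the projection field directly: it fixes $i$ and shows that $p\mapsto \norm{d^K_p x^i}_{K,p}=\norm{P_pX_i}$ is upper semicontinuous, by taking $p_k\to p$, extracting a subsequence along which $Y_k=P_{p_k}X_i$ converges to some $Y$ inside a compact piece of $TU$, and using that $TK$ is closed in $TU$ to conclude $Y\in T_pK$ and hence $\norm{P_pX_i}\geq \lim_k\norm{P_{p_k}X_i}$; measurability of the norms (and, by polarization applied to $P_p(X_i\pm X_j)$, of the inner products) follows. You instead prove Borel measurability of the matrix-valued map $p\mapsto P_p$ itself: writing $P_p=I-\Pi_p$ with $\Pi_p$ the projection onto $V_p=\set{\nabla f(p)~|~f\in\idea_K}$, you exhibit $\Pi_p$ as a pointwise limit of the projections onto the spans of $\nabla f_1(p),\dots,\nabla f_N(p)$ for a countable subfamily of $\idea_K$ that is dense enough to span $V_p$ at every point, with the regularized formula $MM^{\mathsf T}(MM^{\mathsf T}+tI)^{-1}$ and the stabilization of the increasing subspaces handling the points where $\dim V_p$ jumps. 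Each step of yours checks out (the identification $\pkgen{d^Kx^i,d^Kx^j}=\gen{P_pX_i,X_j}$, the separability of $\idea_K$ in the $C^1$ topology, the convergence $A(A+tI)^{-1}\to \Pi_{\operatorname{ran}A}$ for $A\succeq 0$). Your argument yields all the Gram entries at once with no polarization step and is more constructive and robust, needing only a countable generating family rather than local compactness of $TU$; the paper's argument is shorter and gives the strictly stronger conclusion that the fiber norms are upper semicontinuous rather than merely Borel. Both establish the lemma.
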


\begin{proof}
If $p_k\to p$, then  $\norm{d^K_px^i}_{K,p} \geq \limsup_k \norm{d^K_{p_k} x^i}_{K,p}$. This follows because, taking $T_{p} K$ to be the tangent space at $p$,  $P_{p_k}: T_{p_k}U \to T_{p_k}K$ to be the orthogonal projection, this means that $\norm{d^K_p x^i}_{K,p} = \norm{P_{p} X^i}$. 

If we restrict to a subsequence such that $\norm{d^K_{p_k} x^i}_{K,p}$ converges to something greater than 0 (the claim is trivial if such a subsequence does not exist). Then, because the sequence is eventually contained in a compact neighborhood of $(p,0) \in TU$, there is a further subsequence that $Y_k = P_{p_k}X_i$ converges to $Y\in T_pU$. Since the norm is continuous, 
\[
Y = \lim_{k\to\infty} \frac{\gen{Y_k,X_i}}{\norm{Y_k}^2} Y_k = \lim_{k\to\infty} \frac{\gen{Y,X_i}}{\norm{Y}^2} Y
\] 
but since $Y$ is in $T_p K$ this implies that 
\[
\norm{P_p X_i} \geq \frac{\gen{Y,X_i}}{\norm{Y}} = \lim_{k\to\infty }  \norm{d^K_{p_k} x^i}_{K,p},
\]
\end{proof}

Since, $T_pK$ is a measurable field over $K$, it is possible to consider the direct integral of this field, as follows.

\begin{defn}
For the rest of this section we shall assume that $\mu$ is a Radon measure on $U$ with support in $K$. We define measurable forms on $K$ by the direct integral 
\[
\ltwoform(K,\mu) = \int_{U}^\oplus T_x^*K \ d\mu(x).
\]
We shall write $\ltwoform(K)$ when the choice of measure is clear. This is also a Hilbert space with
\[
\gen{\omega,\eta}_K  = \int_K\pkgen{\omega_p,\eta_p} \ d\mu(p).
\]
\end{defn}

We consider $\cform(K)\subset \ltwoform(K)$ in the natural way.

\begin{thm}
$\cform(K)$ is a dense subspace of $\ltwoform(K)$. 
\end{thm}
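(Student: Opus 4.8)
## Proof proposal for the density of $\cform(K)$ in $\ltwoform(K)$

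\textbf{The plan} is to exhibit, for an arbitrary $\omega \in \ltwoform(K,\mu)$ and $\eps > 0$, a section in $\cform(K)$ that approximates $\omega$ in the $\ltwoform(K)$-norm. The natural strategy is to reduce to simple functions in the direct integral and then approximate the ``coefficients'' by continuous ones, exploiting that $\mu$ is a Radon measure supported in $K$. Concretely, I would proceed in three stages: first reduce to a single measurable ``coordinate slot'', then invoke a Lusin-type argument, then assemble and estimate.

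\textbf{Step 1: Reduction to coordinate sections.} Every fiber $T_p^*K$ is a quotient of the fixed finite-dimensional space $T_p^*U \cong \R^m$, spanned by $d_p^K x^1,\dots,d_p^K x^m$. By the lemma just proved, $p \mapsto \pkgen{d^K_p x^i, d^K_p x^j}$ is measurable, so the sections $e_i : p \mapsto d_p^K x^i$ lie in $\ltwoform(K)$ and measurably span every fiber. Hence any $\omega \in \ltwoform(K)$ can be written $\omega_p = \sum_{i=1}^m a_i(p)\, d_p^K x^i$ for measurable functions $a_i$ on $K$ with $\sum_i \int_K a_i^2\, d\mu < \infty$ — for instance by applying a measurable Gram–Schmidt / pseudo-inverse to the Gram matrix $\big(\pkgen{d^Kx^i,d^Kx^j}\big)$ to solve for the $a_i$ in terms of the (measurable) functions $p \mapsto \pkgen{\omega_p, d_p^K x^i}$. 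It therefore suffices to approximate, for each fixed $i$, a section of the form $a\, e_i$ with $a \in L^2(K,\mu)$ by sections $g\, e_i$ with $g \in C_0(K)$, since the $e_i$ have uniformly bounded fiber-norm ($\norm{d_p^K x^i}_{K,p} = \norm{P_p X_i} \le 1$) and there are only $m$ of them.

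\textbf{Step 2: Lusin approximation of the coefficients.} Fix $i$ and $a \in L^2(K,\mu)$. Since $\mu$ is $\sigma$-finite, Borel regular, and Radon with support in $K$, $C_c(K)$ (equivalently, restrictions to $K$ of functions in $C_0(\overline U)$, which is what $\cform(K)$ allows) is dense in $L^2(K,\mu)$: truncate $a$ to be bounded and supported on a set of finite measure, apply Lusin's theorem to get $g \in C_c(K)$ agreeing with the truncation off a set of small measure and with $\norm{g}_\infty \le \norm{a}_\infty$, and control the $L^2$ error by $2\norm{a}_\infty$ times the square root of the measure of the bad set plus the truncation error. Extend $g$ to $\tilde g \in C_0(\overline U)$ by Tietze. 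Then $g\, e_i \in \cform(K)$ and $\norm{a e_i - g e_i}_{\ltwoform(K)}^2 = \int_K (a-g)^2 \norm{d_p^K x^i}_{K,p}^2\, d\mu(p) \le \int_K (a-g)^2\, d\mu$, which is small.

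\textbf{Step 3: Assembly.} Summing the $m$ coordinate approximations and applying the triangle inequality in $\ltwoform(K)$ gives a section $\sum_i g_i\, d^K x^i \in \cform(K)$ within $\eps$ of $\omega$. \textbf{The main obstacle} is Step 1 — producing the measurable coefficient functions $a_i$ globally from a merely measurable field, since the rank of the quotient map $T_p^*U \to T_p^*K$ can jump from point to point (the lemma only gives lower semicontinuity of the fiber norms, not constancy of dimension). I would handle this by partitioning $K$ into the measurable sets where the Gram matrix has each fixed rank $r$, and on each piece choosing a measurable selection of $r$ coordinate indices whose corresponding $d^Kx^i$ are independent (a countable union of measurable conditions), then solving the resulting invertible linear system with measurable entries; alternatively one can bypass selection entirely by noting $\omega_p = \sum_i \pkgen{\omega_p, d_p^K x^i}\, (d_p^K x^i)^\natural$-type identities via the Moore–Penrose inverse of the Gram matrix, whose entries are measurable functions of the (measurable) Gram matrix. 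Everything downstream of that is the routine Lusin/Tietze density argument.
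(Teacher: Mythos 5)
Your proposal follows essentially the same route as the paper's proof: represent a general element of $\ltwoform(K)$ as $\sum_{i=1}^m \omega_i\, d^K x^i$ with $L^2$ coefficients, approximate each coefficient by continuous functions using that $\mu$ is Radon, and control the error via the uniform bound $\abs{\pkgen{d^Kx^i,d^Kx^j}}\le 1$. The only real difference is that you spell out the measurable-selection/pseudo-inverse argument behind writing the coefficients measurably, a step the paper simply asserts; this extra care is welcome but does not change the argument.
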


\begin{proof}
Every element in $\ltwoform(K)$ is the space of measurable sections $\sum_{i=1}^m \omega_i d^K x^i$ for $\omega^i\in L^2(K,\mu)$. In this case
\[
\norm{\omega}_{\omega_{L^2}(K,\mu)} = \sum_{i,j=1}^m \int \omega_i\omega_j \gen{d^Kx^i,d^Kx^j} \ d\mu
\] 
By approximating each $\omega_i$ by continuous functions with respect to $\mu$ and noting that $|\gen{d^Kx^i,d^Kx^j}|\leq 1$, one sees that $\omega$ can be approximated by a forms with these coefficients.
\end{proof}

If $\nu$ is another Radon measure with support on $K$ with $\nu \ll \mu$, then there is a natural map $\ltwoform(K,\nu)\to \ltwoform(K,\mu)$ by $\omega \mapsto \paren{d\mu/d\nu}\omega$.

\section{Two notions of 1-forms}\label{sec:21forms}

In this section we shall compare the differential forms from \cite{IRT12,CS07,HRT} to $C^1$ differential forms. First we shall recall some basic definitions and concepts from the theory of Dirichlet forms, for more see \cite{BH91, FOT11}. For a locally compact metric space $K$, consider a regular Dirichlet form $(\eng,\Dom)$ on $L^2(K,\mu)$, where $\mu$ is a Radon measure. That is $(\eng,\Dom)$ satisfy
\begin{enumerate}
[(DF1)]
\item $\Dom\subset L^2(K)$ is a dense subspace and $\eng:\Dom\times\Dom\to \R$ is a non-negative definite symmetric bilinear form.
\item \emph{Closed}: $\Dom$ is a a Hilbert space with the inner product 
\[
\eng_1(f,g) := \eng(f,g) + \gen{f,g}_{L^1(K,\mu)}.
\]
\item \emph{Markov property}: $f\in \Dom$ implies that $\hat f = (0\vee f)\wedge 1 \in \Dom$ and  $\eng(\hat f ,\hat f) \leq \eng (f,f)$.
\item \emph{Regular}: The space $\diralg:= C_c(K)\cap \Dom$ is uniformly dense in $C_c(K)$ and dense in $\Dom$ with respect to the norm induced by $\eng_1$.
\end{enumerate} 

It shall also be assumed that $(\eng,\Dom)$ is strongly local: For $u,v\in\Dom$, if $u$ is constant in the support of $v$, then $\eng(u,v) =0$.

From \cite[Chapter I 3.3]{BH91}, the space $\diralg$ from (DF4) is an algebra with respect to pointwise multiplication and addition, thus we shall refer to it as the Dirichlet algebra. For any $f\in \diralg$ define the energy measure 
\[
\int \phi d\Gamma(f) = \eng(\phi f,f) - \frac12\eng(\phi,f^2) \text{ when } \phi \in \diralg.
\]
Consider the space $\diralg\otimes\diralg$ with the bilinear form 
\[
\gen{a\otimes b,c\otimes d}_\mathcal{H} = \int_K bd \ d\Gamma(a,c).
\]
As in \cite{HRT}, define the differential forms on the space $K$ to be the space $\mathcal H$, which is attained from $\diralg\otimes\diralg$ by factoring out the zero space of $\gen{.,.}_\mathcal{H}$ and completing.

Here we shall assume, that there is a finite coordinate sequence  $\Phi = (\phi^i)_{i=1}^m$ of finite energy functions $\phi^i:K \to \R$, that is
\begin{enumerate}
[(CO1)]
\item for all $i,j\in \N$, $d\Gamma(\phi^i,\phi^j)/d\mu \in L_1(K,\mu)\cap L_\infty(K,\mu)$,
\item the space of functions 
\[
C^1(\Phi) = \set{F(\phi^{1},\ldots,\phi^{m})~|~F\in C^1_0(\R^n)}
\] 
is dense in $\diralg$ with respect to $\eng_1$ (the inner product from (DF2)).
\end{enumerate} 

Notice, that the assumption that $C^1(\Phi)$ is dense in $\mathcal C$ implies that it is dense in $C_0(K)$ which means that it is point separating, so that it is an embedding into the space.

We shall refer to $\Phi$ as a function from $K\to\R^m$. We shall define $K_\Phi = \Phi(K)$ and $\tilde{\mu}=\Phi^*\mu$, where $\mu$ is a $\sigma$-finite Borel regular measure with full support. Because $\Phi$ is point separating, it is a homeomorphism between $K$ and $K_\Phi$.

The energy measures $\Gamma$ satisfy the following chain rule from theorem 3.2.2  in \cite{FOT11}: if for $f,h, g_1,\ldots,g_k \in \Dom$ and $F\in C^1(\R^k)$, if $f=  F(g_1,\ldots, g_k)$, then
\[
\Gamma(f,h) = \sum_{i=1}^k \frac{\partial F}{\partial x^i} \Gamma(g_i,h).
\]

\begin{proposition}\label{prop:Density}
Elements of the form $\sum_{i=1}^m\phi^i\otimes \omega_i$ for $\omega_i\in\diralg$ are dense in $\mathcal H$.
\end{proposition}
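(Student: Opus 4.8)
The plan is to show that the span of simple tensors $\phi^i \otimes \omega_i$ (with $\omega_i \in \diralg$) is dense in $\mathcal H$ by reducing to the generating set. Recall $\mathcal H$ is, by definition, the completion of $\diralg \otimes \diralg$ modulo the null space of $\gen{\cdot,\cdot}_{\mathcal H}$; hence it suffices to show that an arbitrary generator $a \otimes b$ with $a, b \in \diralg$ can be approximated in the $\mathcal H$-norm by finite sums of the form $\sum_i \phi^i \otimes \omega_i$. Since $\gen{\cdot,\cdot}_{\mathcal H}$ is $\diralg$-bilinear in the sense that $\gen{a\otimes b, c\otimes d} = \int_K bd\, d\Gamma(a,c)$ depends on $a$ only through $d\Gamma(a,\cdot)$, the first step is to replace $a$ by an element of $C^1(\Phi)$: by (CO2), $C^1(\Phi)$ is dense in $\diralg$ with respect to $\eng_1$, and one checks that $\eng_1$-convergence $a_n \to a$ forces $a_n \otimes b \to a \otimes b$ in $\mathcal H$, because
\[
\norm{(a_n - a)\otimes b}_{\mathcal H}^2 = \int_K b^2 \, d\Gamma(a_n - a) \leq \norm{b}_\infty^2 \, \eng(a_n - a, a_n - a) \to 0,
\]
using that $b \in \diralg \subset C_c(K)$ is bounded and that $d\Gamma(a_n-a)(K) = 2\eng(a_n-a,a_n-a)$.

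Next I would handle a single $a = F(\phi^1,\ldots,\phi^m)$ with $F \in C^1_0(\R^m)$. The chain rule for energy measures (theorem 3.2.2 of \cite{FOT11}, quoted above) gives, for any $c \in \diralg$,
\[
\Gamma(a, c) = \sum_{i=1}^m \frac{\partial F}{\partial x^i}(\phi^1,\ldots,\phi^m)\, \Gamma(\phi^i, c).
\]
Setting $\omega_i := \dfrac{\partial F}{\partial x^i}\circ \Phi$, which lies in $C^1(\Phi) \subset \diralg$ since $\partial F/\partial x^i \in C_0(\R^m)$ (and is even $C^1$ of the coordinates if $F \in C^2$; otherwise one approximates $F$ by $C^2$ functions first, or simply notes $C^1(\Phi)\subset\diralg$ suffices for membership), I claim $a \otimes b$ and $\sum_{i=1}^m \phi^i \otimes (\omega_i b)$ have the same image in $\mathcal H$. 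Indeed, for any generator $c \otimes d$,
\[
\gen{a \otimes b, c\otimes d}_{\mathcal H} = \int_K bd \, d\Gamma(a,c) = \sum_{i=1}^m \int_K \omega_i b d \, d\Gamma(\phi^i, c) = \gen{\textstyle\sum_i \phi^i \otimes (\omega_i b),\, c\otimes d}_{\mathcal H},
\]
so their difference is orthogonal to all generators, hence is the zero element of $\mathcal H$. Since $\omega_i b \in \diralg$, this exhibits the image of $a\otimes b$ as an element of the claimed span, and combining with the first step (and bilinearity/continuity to pass from single generators to arbitrary elements of $\diralg\otimes\diralg$, then to their completion) gives density.

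The main obstacle I anticipate is the regularity bookkeeping in the chain-rule step: the chain rule as stated needs $F \in C^1$, but to guarantee $\partial F/\partial x^i$ composed with $\Phi$ lands in $\diralg$ (and not merely in $C_0(K)$) one wants a little more — either $F \in C^2$, approximating a general $C^1_0$ function $F$ by $C^2$ ones uniformly together with their first derivatives (which is possible on $\R^m$ and, via (CO2) plus continuity of the tensor construction, preserves $\mathcal H$-limits), or a direct argument that $C^1(\Phi)$ is closed under the operations needed. One must also verify that the $\eng_1$-approximation in the first step can be arranged so that the approximants themselves lie in $C^1(\Phi)$ (immediate from (CO2)) and track that all bounds are uniform in the relevant parameters. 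None of these is deep, but they are the points where the proof must be written carefully rather than waved through.
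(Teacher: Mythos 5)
Your proposal is correct and follows essentially the same route as the paper: apply the chain rule for energy measures to rewrite $F(\phi^1,\ldots,\phi^m)\otimes\omega$ as $\sum_k \phi^k\otimes\bigl(\omega\,\partial F/\partial x^k\bigr)$, then invoke the density of $C^1(\Phi)$ in $\diralg$ from (CO2). The extra bookkeeping you flag (continuity of $a\mapsto a\otimes b$ in $\eng_1$, and arranging $\partial F/\partial x^i\circ\Phi\in\diralg$ via $C^2$ approximation) is exactly the detail the paper's one-line proof leaves implicit, and your handling of it is sound.
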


\begin{proof}
Using the chain rule for $\Gamma$ one can show that 
\[
F(\phi^{i_1},\ldots,\phi^{i_n})\otimes \omega = \
\sum_{k=1}^n \phi^{i_k}\otimes\paren{\omega\frac{\partial F}{\partial x^{i_k}}} 
\]
and thus  result then follows because $C^1(\Phi)$ is dense in $\dom$.
\end{proof}

Next, we define the map between differential forms on $U$ to $\irt$.

\begin{defn}
Define, for $\omega = \sum_{i=1}^m \omega_i dx^i$, the map $\pi: \cform(U) \to \irt$, by 
\[
\pi \omega = \sum_{i=1}^m \phi^i\otimes (\omega_i\circ \Phi)\]
and define the semi-norm $\norm{\omega}_Z = \norm{\pi\omega}_\irt$. Define 
\[
\mathcal N = \Omega_{C}^1 (U)/\ker\pi \cong \pi(\Omega_{C}^1(U))
\]
\end{defn}

Elements of the form $\sum_{i=1}^m\phi^i\otimes\omega_i = \pi\Omega_{C}^1(U)$ are dense in $\irt$ from proposition \ref{prop:Density}.

\begin{prop}\label{prop:seminormZ}
The seminorm $\norm{.}_Z$ has the following formula,
\[
\norm{\omega}_Z^2 = \int_{K} \tilde\omega_i Z^{ij}\tilde\omega_j \ d\mu(x)
\]
where $\tilde\omega_i = \omega_i\circ\Phi$ and 
\[
Z^{ij}_x  = \frac{d\Gamma(\phi^i,\phi^j)}{d\mu}(x).
\]
Thus $\norm{d f}_{Z}^2 = \eng(f,f)$ and 
$d\idea_{K_\Phi} \subset \ker\pi$. Further, if $ P_x$ is the orthogonal projection $T^*_{\Phi(x)}U \to T_{\Phi(x)}K_\Phi$, then $P_xZ_xP_x=Z_x$ for $\mu$-almost every $x$. 
\end{prop}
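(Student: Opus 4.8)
The plan is to establish the four assertions in order: the first is definitional, the second and third come from the chain rule for energy measures, and the fourth --- the only one with real content --- from a positive--semidefiniteness argument built on the third.

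For the formula $\norm{\omega}_Z^2 = \int_K \tilde\omega_i Z^{ij}\tilde\omega_j\,d\mu$ I would simply expand $\norm{\pi\omega}_\irt^2$: by bilinearity of $\gen{\cdot,\cdot}_\irt$ and its defining formula $\gen{a\otimes b,c\otimes d}_\irt = \int_K bd\,d\Gamma(a,c)$ one gets $\sum_{i,j}\int_K \tilde\omega_i\tilde\omega_j\,d\Gamma(\phi^i,\phi^j)$, and (CO1) makes each $\Gamma(\phi^i,\phi^j)$ absolutely continuous with respect to $\mu$ with bounded density $Z^{ij}$. For $\norm{df}_Z^2 = \eng(f,f)$ --- where $\eng(f,f)$ means $\eng(f\circ\Phi,f\circ\Phi)$ for $f\circ\Phi = F(\phi^1,\dots,\phi^m)\in C^1(\Phi)\subset\diralg$ --- I would take $\omega = df$, so that $\tilde\omega_i = \partial_i f\circ\Phi$, apply the chain rule for $\Gamma$ twice to get $\Gamma(f\circ\Phi,f\circ\Phi) = \sum_{i,j}(\partial_i f\circ\Phi)(\partial_j f\circ\Phi)\Gamma(\phi^i,\phi^j)$, and integrate over $K$, using that the total mass of $\Gamma(g,g)$ is $\eng(g,g)$. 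The inclusion $d\idea_{K_\Phi}\subset\ker\pi$ is then immediate: if $u\in\idea_{K_\Phi}$ then $u\circ\Phi\equiv 0$ on $K$, so $\norm{du}_Z^2 = \eng(u\circ\Phi,u\circ\Phi) = 0$, and since $\irt$ carries a genuine inner product, $\pi(du) = 0$.

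For $P_xZ_xP_x = Z_x$ almost everywhere, I would first record two structural facts. Under the standard identification $T^*_{\Phi(x)}U\cong\R^m$ the operator $P_x$ is the orthogonal projection whose range is the orthogonal complement of $d_{\Phi(x)}\idea_{K_\Phi}$ and whose kernel is therefore $d_{\Phi(x)}\idea_{K_\Phi}$ itself --- this follows from Theorem \ref{tkdef} together with the identity $P\sharp = \sharp\rho$ of Section \ref{sec:DiffForms}. And each matrix $Z_x = (Z^{ij}_x)$ is symmetric and positive semidefinite for $\mu$-a.e.\ $x$, since for $c\in\R^m$ the quantity $\sum_{i,j}c_ic_jZ^{ij}$ is the $\mu$-density of the nonnegative measure $\Gamma(\sum_i c_i\phi^i,\sum_i c_i\phi^i)$, and one runs $c$ over a countable dense subset. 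Because $P_x$ is a symmetric idempotent, $P_xZ_xP_x = Z_x$ is equivalent to $Z_x(I - P_x) = 0$, i.e.\ to $Z_x$ annihilating $d_{\Phi(x)}\idea_{K_\Phi}$. To prove that, fix $u\in\idea_{K_\Phi}$; the inclusion just established gives $0 = \norm{du}_Z^2 = \int_K \grad u(\Phi(x))\cdot Z_x\grad u(\Phi(x))\,d\mu(x)$, so by positive semidefiniteness the integrand vanishes for $\mu$-a.e.\ $x$, and $v\cdot Z_x v = 0$ forces $Z_x v = 0$; hence $Z_x\grad u(\Phi(x)) = 0$ for $\mu$-a.e.\ $x$.

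The final step, and the one I expect to be the main nuisance, is removing the dependence of the exceptional null set on $u$: $C^1_0(U)$ is separable, hence so is its closed subspace $\idea_{K_\Phi}$, so I would fix a countable $C^1_0$-dense set $\{u_n\}\subset\idea_{K_\Phi}$, throw away the countably many null sets above along with the one on which $Z_x$ fails to be bounded or positive semidefinite, and note that on the remaining conull set $Z_x\grad u_n(\Phi(x)) = 0$ for all $n$; since $u\mapsto\grad u(\Phi(x))$ is $C^1_0$-continuous uniformly in $x$ and $Z_x$ is a fixed bounded matrix, this persists for every $u\in\idea_{K_\Phi}$, and as $d_{\Phi(x)}\idea_{K_\Phi} = \{\grad u(\Phi(x)) : u\in\idea_{K_\Phi}\}$ we conclude $Z_x(I - P_x) = 0$ there. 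Apart from this measure-theoretic bookkeeping and the need to pin down $\ker P_x = d_{\Phi(x)}\idea_{K_\Phi}$ cleanly from the earlier sections, the argument is entirely chain rule. (One could also bypass the positive-semidefiniteness step by computing $d\Gamma(u\circ\Phi,\phi^i)/d\mu = \sum_j(\partial_j u\circ\Phi)Z^{ji}$ directly from the chain rule and invoking $\Gamma(u\circ\Phi,\phi^i) = \Gamma(0,\phi^i) = 0$.)
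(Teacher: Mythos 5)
Your treatment of the first three assertions coincides with the paper's: expand $\gen{\pi\omega,\pi\omega}_\irt$ by bilinearity, use the chain rule for $\Gamma$ to get $\norm{df}_Z^2=\eng(f\circ\Phi,f\circ\Phi)$, and note that $u\in\idea_{K_\Phi}$ makes $u\circ\Phi$ constant so $\pi\,du=0$. For the last assertion your route is genuinely different. The paper stays at the level of forms: it asserts $\gen{P\eta,P\omega}_Z=\gen{\eta,\omega}_Z$ for all $\eta,\omega\in\cform(U)$ (because the fibrewise projection only changes a form by something ``in $d\idea_{K_\Phi}$'') and then reads off $(P_xZ_xP_x)^{ij}=Z_x^{ij}$ a.e.\ from the two integral expressions. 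You instead work pointwise with the matrix: $Z_x$ is symmetric positive semidefinite a.e., it annihilates $\ker P_x=d_{\Phi(x)}\idea_{K_\Phi}$ because $0=\norm{du}_Z^2$ forces the nonnegative integrand to vanish a.e.\ and $v\cdot Z_xv=0$ forces $Z_xv=0$, whence $Z_x(I-P_x)=0$ and, by symmetry, $P_xZ_xP_x=Z_x$. This buys something: the paper's intermediate identity quietly requires that $\eta-P\eta$, which is only \emph{fibrewise} in $d_{\Phi(x)}\idea_{K_\Phi}$ and need not be $du$ for any single $u\in\idea_{K_\Phi}$ (nor even a continuous section), lies in $\ker\pi$ --- which is essentially the a.e.\ statement you prove directly. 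So your version makes explicit a step the paper leaves implicit.

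The one genuine flaw is the separability claim. $C^1_0(U)$ with $\conenorm{\cdot}$ is \emph{not} separable for general open $U$: the derivatives are only required to be bounded and continuous on $U$, and already on $U=(0,1)$ the antiderivatives of mean-zero bounded continuous functions give an uncountable uniformly separated family in the $C^1$ norm. So you cannot extract your countable family from density in $\idea_{K_\Phi}$. Fortunately you do not need density; you only need countably many $u_n\in\idea_{K_\Phi}$ with $\spn\set{\grad u_n(\Phi(x))}=d_{\Phi(x)}\idea_{K_\Phi}$ for $\mu$-a.e.\ $x$. This follows from an exhaustion argument: for a finite $F\subset\idea_{K_\Phi}$ set $g_F(x)=\dim\spn\set{\grad u(\Phi(x)):u\in F}\le m$; the net $(g_F)_F$ is nondecreasing with supremum $\dim d_{\Phi(x)}\idea_{K_\Phi}$, so on each set of finite $\mu$-measure one may pick finite sets $F_n$ with $\int g_{F_n}\,d\mu$ increasing to $\sup_F\int g_F\,d\mu$, and maximality forces $g_{\bigcup_nF_n}(x)=\dim d_{\Phi(x)}\idea_{K_\Phi}$ a.e.; $\sigma$-finiteness of $\mu$ finishes it. With that substitution (no uniform approximation of gradients is then needed --- $Z_x$ kills a spanning set of $\ker P_x$, hence all of it), your argument is complete and correct.
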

\begin{proof}
To see the formula, let $\omega = \sum_{i=1}^n \omega_idx^i $ and $\tilde \omega_i = \omega_i\circ\Phi$,
\[
\gen{\pi\omega,\pi\omega}_{\pi} = \sum_{i,j} \gen{\phi^i\otimes \tilde\omega_i,\phi^j\otimes \tilde\omega_j}  = \sum_{i,j} \int \omega_iZ^{ij}\omega_j \ d\mu .
\]
Thus, by the chain rule,
\[
\norm{df}_Z = \sum_{i,j} \int \frac{\partial f }{\partial x^i}\frac{\partial f}{\partial x^j} d \Gamma(\phi^i,\phi^j) = \eng(f\circ\Phi,f\circ\Phi).
\]
For any function $f\in \idea_{K_\Phi}$, $f\circ \Phi$ is a constant. Thus $\norm{\pi df}_\irt = \eng(f\circ \Phi) = 0$.

This implies that $P\omega = \sum_{i=1}^m P_x\omega_idx^i$ is the fibrewise projection from $\cform(U)$ to $\cform(K)$, then $\gen{P\eta,P\omega}_Z =\gen{\eta,\omega}_Z$ for any $\eta,\omega\in\cform(U)$. Since $\int_K \tilde\omega_i (PZP)^{ij} \tilde\eta_j \ d\mu = \int_K\tilde\omega_iZ^{ij} \tilde \eta_j \ d\mu$, it implies that $(P_xZ_xP_x)^{ij} = Z^{ij}_x$ for almost all $x$
\end{proof}

\begin{thm}
~
\begin{enumerate}
\item $\pi$ embeds $\mathcal N$ into a dense subspace of $\irt$.

\item Because $\Omega_C^1 (K_\Phi) = \Omega_C^1 (U)/d\idea_{K_\Phi}$, and $d\idea_{K_\Phi} \subset \ker\pi$, $\pi$ descends to a homomorphism $\tilde\pi: \Omega_C^1(K_\Phi) \to \irt$, which is given by the formula $\sum_i \omega_id^Kx^i \mapsto \sum_i \phi^i\otimes ( \omega_i\circ\Phi)$. 

\item As a function from $\ltwoform(K) \to \irt$,  $\pi$ is a densely defined closable operator and $\pi^*$ is given by
\begin{align}\label{zformula}
\pi^*\sum_{i=1}^m\phi^i\otimes \omega_i = \sum_{i,j}^m Z^{ij}_*\omega_i d^Kx^j.
\end{align}
where 
$
Z_*^{ij}(x)  =  Z^{ij}_{y}$ if $x = \Phi(y) $.
\end{enumerate}
\end{thm}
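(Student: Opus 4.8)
The three parts are of rather different character, so I would dispatch them in order, with essentially all of the work in Part (3). For Part (1), injectivity of $\pi$ on $\mathcal N$ is built into the definition $\mathcal N=\Omega_C^1(U)/\ker\pi$, so only density of the range is at issue. By Proposition~\ref{prop:Density} the elements $\sum_{i=1}^m\phi^i\otimes\omega_i$ with $\omega_i\in\diralg$ are dense in $\irt$, so it suffices to approximate each of these inside $\pi(\cform(U))$. Given $\omega_i\in\diralg$, (CO2) provides $G_i^{(n)}\in C^1_0(\R^m)$ with $G_i^{(n)}\circ\Phi\to\omega_i$ in $\eng_1$; then $\pi(G_i^{(n)}\,dx^i)=\phi^i\otimes(G_i^{(n)}\circ\Phi)$, and since $\|\phi^i\otimes g\|_\irt^2=\int_K|g|^2\,d\Gamma(\phi^i)\le\|Z^{ii}\|_\infty\|g\|_{L^2(\mu)}^2$ by (CO1), while $\eng_1$-convergence dominates $L^2(\mu)$-convergence, these converge to $\phi^i\otimes\omega_i$ in $\irt$. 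Summing over $i$ gives the density.

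Part (2) is a formal consequence of Proposition~\ref{prop:seminormZ}. That result gives $\langle P\eta,P\omega\rangle_Z=\langle\eta,\omega\rangle_Z$ fibrewise, and the kernel of the fibrewise cotangent projection $P_p$ onto $(d_p\idea_{K_\Phi})^\perp$ is exactly $\ker\rho_p=d_p\idea_{K_\Phi}$ (by Theorem~\ref{tkdef}); hence any $\omega$ with $\rho\omega=0$ satisfies $\|\omega\|_Z^2=\langle P\omega,P\omega\rangle_Z=0$, i.e. $\omega\in\ker\pi$. So $\ker\rho\subseteq\ker\pi$ (in particular $d\idea_{K_\Phi}\subseteq\ker\pi$), and the universal property of the quotient $\cform(K_\Phi)=\cform(U)/\ker\rho$ yields the descended map $\tilde\pi$; tracking a representative $\sum_i\omega_i\,dx^i$ through $\rho$ and then $\pi$ reads off the formula $\sum_i\omega_i\,d^Kx^i\mapsto\sum_i\phi^i\otimes(\omega_i\circ\Phi)$.

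Part (3) carries the content. The domain $\cform(K_\Phi)$ is dense in $\ltwoform(K_\Phi)$, so it remains to identify $\pi^*$ and thereby deduce closability. I would compute $\langle\pi\beta,\alpha\rangle_\irt$ for $\beta=\sum_j\beta_j\,d^Kx^j\in\cform(K_\Phi)$ and $\alpha=\sum_i\phi^i\otimes\omega_i$ ranging over the dense subspace $\mathcal D\subset\irt$ of Proposition~\ref{prop:Density}. Using the formula for $\tilde\pi$, the definition of $\langle\cdot,\cdot\rangle_\irt$, and Proposition~\ref{prop:seminormZ},
\[
\langle\pi\beta,\alpha\rangle_\irt=\sum_{i,j}\int_K(\beta_j\circ\Phi)\,\omega_i\,d\Gamma(\phi^j,\phi^i)=\sum_{i,j}\int_K(\beta_j\circ\Phi)\,\omega_i\,Z^{ij}\,d\mu.
\]
Pushing forward along the homeomorphism $\Phi$ (so $\int_Kf\,d\mu=\int_{K_\Phi}(f\circ\Phi^{-1})\,d\tilde\mu$ and $Z^{ij}\circ\Phi^{-1}=Z_*^{ij}$, and identifying $\omega_i$ with $\omega_i\circ\Phi^{-1}$ on $K_\Phi$), I would rewrite the right side as $\langle\beta,\sum_{i,j}Z_*^{ij}\omega_i\,d^Kx^j\rangle_{\ltwoform(K_\Phi)}$: the point is that the Gram matrix of the quotient inner product is $\langle d^Kx^k,d^Kx^l\rangle=P^{kl}$, the canonical representative of $d^Kx^k$ in $(d_p\idea_{K_\Phi})^\perp$ being $P\,dx^k$, so contracting $\sum_{i,j}Z_*^{ij}\omega_i\,d^Kx^j$ against $\beta$ produces the factor $\sum_lZ_*^{il}P^{lk}=Z_*^{ik}$ by $ZP=Z$ (equivalently $PZP=Z$, from Proposition~\ref{prop:seminormZ}), matching the left side. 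Since $Z_*^{ij}\in L^\infty$ by (CO1) and $\omega_i\in L^2$, the expression $\sum_{i,j}Z_*^{ij}\omega_i\,d^Kx^j$ genuinely lies in $\ltwoform(K_\Phi)$, so $\mathcal D\subseteq\operatorname{dom}(\pi^*)$ with $\pi^*$ there given by \eqref{zformula}; hence $\operatorname{dom}(\pi^*)$ is dense, $\pi$ is closable, and the formula extends to all of $\operatorname{dom}(\pi^*)$ by closedness of $\pi^*$. (In fact the same tangentiality of $Z$ yields $\|\pi\omega\|_\irt^2=\sum_{i,j}\int_{K_\Phi}Z_*^{ij}\omega_i\omega_j\,d\tilde\mu\le m\max_i\|Z^{ii}\|_\infty\|\omega\|_{\ltwoform(K_\Phi)}^2$, so $\pi$ is even bounded, a fortiori closable.)

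The only genuinely delicate step is this last computation: one must keep the two measures straight — $\mu$ on $K$, carrying the energy forms $\Gamma$ and hence $Z$, versus $\tilde\mu=\Phi^*\mu$ on $K_\Phi$, carrying the $L^2$ cotangent structure — and verify that the Gram matrix $\langle d^Kx^i,d^Kx^j\rangle$ of the quotient metric interacts with $Z$ precisely so that the asymmetric-looking formula \eqref{zformula} is in fact symmetric and correct. Everything there hinges on $PZP=Z$, i.e. exactly the tangentiality of the energy matrix proved in Proposition~\ref{prop:seminormZ}; granting that, the rest is a change of variables and routine linear algebra, and Parts (1) and (2) are essentially repackagings of Propositions~\ref{prop:Density} and~\ref{prop:seminormZ}.
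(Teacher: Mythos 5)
Your proposal is correct and follows essentially the same route as the paper: Part (1) via Proposition \ref{prop:Density}, Part (2) via Proposition \ref{prop:seminormZ} and the homomorphism theorems, and Part (3) by pairing $\pi\beta$ against the dense set of simple tensors and using $PZP=Z$ to identify $\pi^*$ (the paper's computation inserts $(PZP)^{ij}$ at exactly the step where you invoke the Gram matrix $P^{kl}$ of the quotient metric). Your extra details — the (CO1)/(CO2) approximation making the density of the range in Part (1) explicit, and the observation that $\pi$ is in fact bounded — are welcome refinements of arguments the paper leaves terse or records only after the theorem.
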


\begin{proof}
Part (1) is a result of the fact that the image of $\pi$ are tensors of the form $\sum_{i=1}^n \phi^i \otimes \omega_i$ for $\omega_i\in C(K)$, which are dense in $\irt$ by proposition \ref{prop:Density}. (2) is a corollary of proposition \ref{prop:seminormZ} and the homomorphism theorems.

(3) Because $\cform(U)$ is dense in $\ltwoform(U)$, $\pi$ is a densely defined operator there.  Because elements of the form $\sum_{i=1}^m \phi^i \otimes \omega_i$, $\omega_i\in \diralg$ are dense in $\mathcal H$, formula (\ref{zformula}) implies $\pi^*$ is densely defined, and hence $\pi$ is closable.

To see the formula,
\begin{align*}
\gen{\pi \sum_{i=1}^m\eta_idx^i,\sum_{i=1}^m \phi^i\otimes \omega_i} 
 &  = \sum_{i,j=1}^m\int \tilde\eta_jZ^{ij}\omega_i d\mu(x) \\
 & = \sum_{i,j=1}^m\int \tilde\eta_j(PZP)^{ij}\omega_i d\mu(x) \\
& = \gen{\sum_{i=1}^m\eta_idx^i,\pi^*\sum_{i=1}^m\phi^i\otimes \omega_i}.
\end{align*}
\end{proof}

We define $Z:\ltwoform(K)\to \ltwoform(K)$ by $Z:= \pi^*\pi$

\begin{lem}\label{gammaandz}
For all $f,g\in C^1_0(K)$, then 
\[
\pkgen{d^K_pg,(Z d^Kf)_p} = \frac{d\Gamma(f\circ\Phi,g\circ \Phi)}{d\mu}(p) = \sum_{i,j=1}^m Z^{ij}_p \frac{\partial f}{\partial x^i}\frac{\partial g}{\partial x^j}
\]
for almost all $p$. In particular, since 
\[
\pkgen{d^K_pf,(Zd^Kf)_p}=\frac{d\Gamma(f)}{d\mu}(p)\geq 0\quad \text{a.e.},
\]
$Z$ acts on almost every fiber of $\ltwoform(K)$ by the matrix $Z_p^{ij}$ which is non-negative definite.
\end{lem}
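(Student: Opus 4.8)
The plan is to reduce the lemma to Proposition~\ref{prop:seminormZ} and the chain rule for energy measures recalled above, treating the two displayed equalities in turn and then extracting the fibrewise statement about $Z$.

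The second equality is pure chain rule, and is the local form of the computation already carried out in the proof of Proposition~\ref{prop:seminormZ}. Applying the chain rule for $\Gamma$ twice, with $f\circ\Phi=\tilde f(\phi^1,\ldots,\phi^m)$ and $g\circ\Phi=\tilde g(\phi^1,\ldots,\phi^m)$ for representatives $\tilde f,\tilde g\in C^1_0(U)$ of $f,g\in C^1_0(K)$ (which lie in $\Dom$ by (CO2)), one gets
\[
\Gamma(f\circ\Phi,g\circ\Phi)=\sum_{i,j=1}^m\left(\frac{\partial\tilde f}{\partial x^i}\circ\Phi\right)\left(\frac{\partial\tilde g}{\partial x^j}\circ\Phi\right)\Gamma(\phi^i,\phi^j);
\]
dividing by $d\mu$ and using $Z^{ij}_p=d\Gamma(\phi^i,\phi^j)/d\mu(p)$ gives the second equality $\mu$-a.e. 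The right-hand side does not depend on the choice of representatives, since $\sum_i(\partial_i\tilde f\circ\Phi)\,d^Kx^i=d^Kf$ does not and $P_xZ_xP_x=Z_x$ (Proposition~\ref{prop:seminormZ}) absorbs exactly the remaining ambiguity.

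For the first equality I would first upgrade $\pi$ to a bounded operator. Because $P_xZ_xP_x=Z_x$ and the entries $Z^{ij}$ are bounded by (CO1), the formula of Proposition~\ref{prop:seminormZ} gives $\norm{\pi\omega}_\irt\leq C\norm{\omega}_{\ltwoform(K)}$ for $\omega\in\cform(K)$; hence $\pi$ extends to a bounded map $\ltwoform(K)\to\irt$ and $Z=\pi^*\pi$ is bounded, everywhere defined, self-adjoint and $\geq0$. Polarizing Proposition~\ref{prop:seminormZ} and using the adjoint relation yields, for all $\omega,\eta\in\cform(K)$,
\[
\int_K\pkgen{\omega_p,(Z\eta)_p}\,d\mu(p)=\gen{\pi\omega,\pi\eta}_\irt=\int_K\sum_{i,j}(\omega_i\circ\Phi)\,Z^{ij}\,(\eta_j\circ\Phi)\,d\mu.
\]
Replacing $\omega$ by $h\omega$ for arbitrary $h\in C_c(K)$ (so $h\omega\in\cform(K)$, with coefficients $h\omega_i$) multiplies both integrands by $h$, so the two $\mu$-integrable integrands must agree $\mu$-a.e.; taking $\omega=d^Kg$, $\eta=d^Kf$, whose coefficients are $\partial_i\tilde g\circ\Phi$ and $\partial_j\tilde f\circ\Phi$, produces the first equality for a.e.\ $p$.

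It remains to justify that $Z$ genuinely acts on $T^*_pK$ by the matrix $(Z^{ij}_p)$ and that this matrix is non-negative definite. The defining formula for $\pi$ shows it intertwines the natural $C_0(K)$-module action on $\ltwoform(K)$ with that on $\irt$, so $Z=\pi^*\pi$ commutes with multiplication by $C_0(K)$, hence with $L^\infty(K)$; by the structure theorem for direct integrals $Z$ is decomposable, $Z=\int_K^\oplus Z_p\,d\mu(p)$ with each $Z_p$ self-adjoint and non-negative (the latter since $Z\geq0$). Specializing the displayed identity to $\omega=d^Kx^k$, $\eta=d^Kx^l$ gives $\pkgen{d^K_px^k,Z_pd^K_px^l}=Z^{kl}_p$ for a.e.\ $p$, and since $\{d^K_px^i\}_{i=1}^m$ spans $T^*_pK$ (Theorem~\ref{tkdef}) this identifies $Z_p$ with the operator of matrix $(Z^{ij}_p)$ in that frame. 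Non-negative definiteness also follows directly: for fixed $v\in\R^m$ one has $\sum_{i,j}v_iv_jZ^{ij}_p=d\Gamma(\sum_i v_i\phi^i)/d\mu(p)\geq0$ for a.e.\ $p$, and ranging $v$ over a countable dense subset of $\R^m$ and invoking continuity gives $Z_p\geq0$ a.e.; in particular the last displayed inequality is the case $\omega=\eta=d^Kf$, where the quantity equals $d\Gamma(f\circ\Phi)/d\mu(p)\geq0$. The step I expect to be the main obstacle is this last one --- turning the integrated identity into a genuinely fibrewise statement, i.e.\ proving $Z$ decomposable and reading its fibre off as $(Z^{ij}_p)$; it rests on the module-map property of $\pi$ and the commutant description of decomposable operators on a direct integral, whereas the chain rule, the boundedness of $\pi$, and the localization by multiplying by $h$ are all routine once that structural point is in place.
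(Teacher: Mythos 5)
The paper states Lemma \ref{gammaandz} with no proof at all, so there is nothing to compare against; what can be said is that your argument is correct and is assembled from exactly the ingredients the paper has already put in place. The chain-rule computation for the second equality, the polarization of Proposition \ref{prop:seminormZ}, the boundedness of $\pi$ from (CO1) together with $P_xZ_xP_x=Z_x$, and the localization by multiplying by $h\in C_c(K)$ are all sound, and your use of $PZP=Z$ to absorb the choice of representative is the right way to handle the quotient. The one place where you work harder than necessary is the fibrewise identification of $Z$: you invoke the commutant description of decomposable operators on a direct integral, but the paper's formula (\ref{zformula}) for $\pi^*$ already exhibits $Zd^Kf=\pi^*\pi d^Kf=\sum_{i,j}Z^{ij}\,(\partial f/\partial x^i)\,d^Kx^j$ fibre by fibre; pairing this against $d^K_pg$ and using that the Gram matrix $\pkgen{d^K_px^k,d^K_px^j}$ is the projection matrix $P_p$, together with $ZP=Z$, gives the first equality and the statement that $Z$ acts on almost every fibre by $(Z^{ij}_p)$ in one short computation, with no appeal to the structure theory of decomposable operators. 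This is almost certainly the proof the author intended to be read off from the preceding theorem. Your route buys a cleaner conceptual statement ($Z$ lies in the commutant of the diagonal algebra, hence is decomposable) at the cost of heavier machinery; either is acceptable, and your direct verification of non-negative definiteness via $\sum_{i,j}v_iv_jZ^{ij}_p=d\Gamma(\sum_iv_i\phi^i)/d\mu\geq0$ on a countable dense set of $v$ is a worthwhile detail that the paper leaves implicit.
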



Give $\mathcal{H}$ a $\mathcal{C}$-module structure on simple tensors with right action for $a,b,c\in\mathcal C$
\[
a(b\otimes c) : =  b\otimes (ca)
\]

\begin{rmk}
This differs slightly from the bimodule structure which is typically given to $\irt$. There is no loss in generality here because we are assuming that $\eng$ is local, and hence the chain rule for energy measures implies the left and the right actions coincide. 
\end{rmk}

 $\coneform(K)$ is a  $C(K)$-module with fiber-wise scalar multiplication: for $\phi\in C(K)$ and $\omega\in \coneform(K)$,
$
(\phi\cdot\omega)_p = \phi(p)\omega_p.
$
Then $\pi$ is a $\mathcal C$-module homomorphism, that is $\pi(\phi\cdot \omega) = \phi \cdot\pi\omega$ for all $\omega\in\coneform(K)$, and $\phi\in\mathcal C$.

Because $\Gamma(\phi^i)\leq c_i\mu$ for some constant $c_i$, we have that there is a constant $c$ such that  $c\norm{\omega}_{\ltwoform(K)} \geq \norm{\pi\omega}_\mathcal{H}$.



Consider the metric 
\[
\rho_\mu(x,y) = \sup\set{f(x)-f(y)~|~\norm{d\Gamma(f)/d\mu}_\infty \leq 1}
\]
on the space $K$. This is called the intrinsic metric of on $K$ with respect to the Dirichlet form as in, for example, \cite{Stu94}.
\begin{prop}
If $\Phi$ is a coordinate sequence as above, and $Z$ is the related matrix as above
\[
\rho_\mu(x,y) = \sup\set{F\circ\Phi(x)-F\circ\Phi(y)~|~F\in C^1_0(U),~ \norm{\pkgen{d^K F,Z d^K F}}_\infty\leq 1}.
\]
\end{prop}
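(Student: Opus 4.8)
The plan is to rewrite the right-hand side as a supremum over the coordinate core $C^1(\Phi)$ and then show that enlarging the class from this core to all admissible functions does not change the supremum. First I apply Lemma \ref{gammaandz} to $F\in C^1_0(U)$, regarded via its restriction to $K_\Phi$: it gives
\[
\pkgen{d^K F, Z\, d^K F} = \frac{d\Gamma(F\circ\Phi)}{d\mu}\quad\text{a.e.},
\]
so the side condition $\norm{\pkgen{d^K F,Z d^K F}}_\infty\le 1$ is precisely $\norm{d\Gamma(F\circ\Phi)/d\mu}_\infty\le 1$, while the objective $F\circ\Phi(x)-F\circ\Phi(y)$ is the increment of $g:=F\circ\Phi\in C^1(\Phi)$. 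Hence the right-hand side equals $\sup\set{g(x)-g(y)~|~g\in C^1(\Phi),~\norm{d\Gamma(g)/d\mu}_\infty\le1}$, and the proposition reduces to the claim that $\rho_\mu$ is computed correctly from the coordinate core.

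The inequality ``right-hand side $\le\rho_\mu$'' is immediate: by (CO1) and the chain rule every $g\in C^1(\Phi)$ lies in $\diralg\subset\Dom$, is continuous, and has $d\Gamma(g)\ll\mu$ with bounded density, so each competitor on the right is one of the functions admitted in the definition of $\rho_\mu$; the supremum over the smaller class is therefore no larger. For the reverse inequality I would use the intrinsic distance function: under the length-space hypotheses cited in the introduction (\cite{Stu94,Sto10,HKT12}) the function $f_0(z):=-\rho_\mu(x,z)$ satisfies the eikonal bound $d\Gamma(f_0)/d\mu\le1$ and realizes $f_0(x)-f_0(y)=\rho_\mu(x,y)$, so it suffices to approximate this single admissible function by elements of $C^1(\Phi)$ in a way that keeps the pointwise energy bound and matches values at $x$ and $y$.

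By (CO2) there are $g_n\in C^1(\Phi)$ with $\eng_1(f_0-g_n)\to0$, and by the remark following (CO2) the core $C^1(\Phi)$ is uniformly dense in $C_0(K)$, which lets me also arrange $g_n(x)\to f_0(x)$ and $g_n(y)\to f_0(y)$, so the increments converge. To restore the side condition I note that constant rescaling is harmless: if I can produce $g_n$ with the \emph{global} bound $\norm{d\Gamma(g_n)/d\mu}_\infty\le 1+\eps_n$, $\eps_n\to0$, then $g_n/\sqrt{1+\eps_n}\in C^1(\Phi)$ satisfies $\norm{d\Gamma(\cdot)/d\mu}_\infty\le1$ and has increment differing from that of $g_n$ by a factor tending to $1$, which finishes the argument. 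The triangle inequality for the energy-measure seminorm gives $\sqrt{d\Gamma(g_n)/d\mu}\le 1+\sqrt{d\Gamma(g_n-f_0)/d\mu}$ with the error small in $L^2(\mu)$.

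The main obstacle is upgrading this to a global $L^\infty$ bound: (CO2) controls the excess density only in $L^1$ (indeed $L^2$), so $g_n$ may overshoot the pointwise constraint on a set of small measure, and a single constant rescaling would then be too crude. To overcome this I would reinstate the pointwise bound by a truncation carried out in the Euclidean coordinate picture, where the admissible competitors are exactly the $F\in C^1_0(U)$ with $\sum_{i,j=1}^m Z^{ij}\frac{\partial F}{\partial x^i}\frac{\partial F}{\partial x^j}\le1$ on $K_\Phi$: replacing $F_n$ by its $Z$-gradient truncation (an inf-convolution against $\sqrt{1+\eps}$ times the $Z$-length, or, alternatively, post-composition of $g_n$ with a $1$-Lipschitz profile, which by the Markov property does not increase the energy density) caps the density while leaving the values along a geodesic from $x$ to $y$ essentially unchanged. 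Checking that such a truncation can be taken inside $C^1(\Phi)$ and preserves the increment up to $\eps$ is the delicate technical core; this is exactly where the Euclidean flexibility of $C^1_0(U)$, rather than mere $\eng_1$-density, must be exploited.
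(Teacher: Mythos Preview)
The paper states this proposition without proof, so there is nothing to compare your argument against; I can only assess it on its own terms.

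Your reduction is correct: by Lemma~\ref{gammaandz}, for $g=F\circ\Phi\in C^1(\Phi)$ one has $\pkgen{d^K F,\,Zd^KF}=d\Gamma(g)/d\mu$ a.e., so the right-hand side of the proposition equals $\sup\{g(x)-g(y):g\in C^1(\Phi),\ \norm{d\Gamma(g)/d\mu}_\infty\le1\}$, and the inequality $\text{RHS}\le\rho_\mu(x,y)$ is immediate because every such $g$ is admissible in the definition of $\rho_\mu$.

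The genuine gap is the reverse inequality, and your own candidate repairs do not close it. Post-composing $g_n$ with a $1$-Lipschitz profile $\psi$ gives, by the chain rule, $d\Gamma(\psi\circ g_n)/d\mu\le (\psi')^2\,d\Gamma(g_n)/d\mu\le d\Gamma(g_n)/d\mu$: this bounds the new density by the \emph{old} one, so it does nothing at the points where $d\Gamma(g_n)/d\mu$ already exceeds $1$; that device caps function values, not gradients. The inf-convolution idea would require a length metric on $U$ induced by $Z$, but $Z$ is defined only $\mu$-a.e.\ on $K_\Phi$ (not on $U$), is typically rank-deficient there (in the example of Subsection~\ref{harmsg} it has rank one $\nu$-a.e.), and in any case inf-convolution produces Lipschitz rather than $C^1$ output, so the result need not lie in $C^1(\Phi)$. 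Note also that you invoke the eikonal bound for $f_0=-\rho_\mu(x,\cdot)$ under length-space hypotheses, but in the paper those hypotheses are imposed only \emph{after} this proposition. What would actually be needed is a statement that the intrinsic metric computed from an $\eng_1$-dense core coincides with the one computed from the full domain; results of this type exist in the literature under extra structural assumptions (stability of the core under suitable truncations, or a Rademacher-type theorem on $K$), but (CO1)--(CO2) alone do not obviously deliver it. You flag this step as ``the delicate technical core,'' which is accurate; as written, though, the argument is incomplete, and the paper itself supplies no proof to fill the gap.
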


In the rest of this section it is assumed that $\rho_\mu$ induces the original topology on $K$. This assumption and the fact that $\Phi$ is a coordinate sequence for $\mu$, implies, by \cite{HKT13}, $\rho_\mu$ forms a shortest path (geodesic) distance on $K$. From corollary \ref{gammaandz}, we get that the following definition is equivalent to the above. That is if $\gamma: [a,b]\to K$, we define its length
\[
L_\mu(\gamma) = \sup\set{\sum_{i=1}^k \rho_\mu(\gamma(t_{i-1}),\gamma(t_{i}))~|~a= t_0< t_1< \cdots < t_m = b},
\]
then 
$
\rho_\mu(x,y) = \inf\set{L_\mu(\gamma)~|~\gamma:[a,b]\to K,~\gamma(a) = x,~\gamma(b) = y }$.

If we define $\lambda_m(Z_x)$ to be the largest eigenvalue of $Z_x$, then we have the following lemma. 
\begin{lem}\label{Zopnorm}
There is a constant $c_Z$ depending only on  $\Phi$ such that
\[
\pkgen{d^K_p F,(Z d^KF)_p} \leq  c_Z\norm{d^K_pF}^2_{T^*_pK}\leq c_Z\abs{\nabla F(p)}
\] 
almost everywhere, where $c_Z\leq \norm{\lambda_m(Z_x)}_{\infty}\leq\norm{\operatorname{Tr} Z_x}_\infty$. 
\end{lem}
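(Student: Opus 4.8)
The plan is to reduce the estimate to a pointwise inequality for the non-negative definite matrix $Z_p=(Z^{ij}_p)$ and then take an essential supremum. By Lemma \ref{gammaandz}, for $\mu$-almost every $p$ one has $\pkgen{d^K_p F,(Z d^K F)_p}=\sum_{i,j=1}^m Z^{ij}_p\,\frac{\partial F}{\partial x^i}(p)\frac{\partial F}{\partial x^j}(p)$ and the matrix $Z_p$ is non-negative definite; thus the left-hand side is a non-negative quadratic form evaluated at the vector $\nabla F(p)\in\R^m$. The elementary spectral bound for a non-negative symmetric matrix gives $\sum_{i,j}Z^{ij}_p v_iv_j\le\lambda_m(Z_p)\,|v|^2$ for every $v\in\R^m$.

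First I would sharpen $|\nabla F(p)|^2$ to $\norm{d^K_pF}_{T_p^*K}^2$. This uses Proposition \ref{prop:seminormZ}, namely $P_pZ_pP_p=Z_p$ a.e., where $P_p$ is the orthogonal projection onto the $K_\Phi$-tangent directions: inserting this identity yields $\sum_{i,j}Z^{ij}_p v_iv_j=\sum_{i,j}Z^{ij}_p(P_pv)_i(P_pv)_j\le\lambda_m(Z_p)\,|P_pv|^2$, and applying it with $v=\nabla F(p)=\sharp d_pF$ together with the relation $\norm{[\omega]}_{T_p^*K}=\norm{P\sharp\omega}$ recorded in Section \ref{sec:DiffForms} gives the middle term of the lemma. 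The last inequality is then immediate from the fact that the quotient norm is dominated by the ambient one, $\norm{d^K_pF}_{T_p^*K}\le|\nabla F(p)|$ (take the representative $d_pF$ itself in the infimum defining the quotient norm).

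It remains to produce the constant. I would set $c_Z:=\esssup_p\lambda_m(Z_p)=\norm{\lambda_m(Z_x)}_\infty$. Since $Z_p$ is non-negative definite, $\lambda_m(Z_p)\le\operatorname{Tr}Z_p=\sum_{i=1}^m Z^{ii}_p=\sum_{i=1}^m \frac{d\Gamma(\phi^i)}{d\mu}(p)$, and hypothesis (CO1) says each $d\Gamma(\phi^i)/d\mu\in L^\infty(K,\mu)$, so $\operatorname{Tr}Z_p\in L^\infty$ and $c_Z\le\norm{\operatorname{Tr}Z_x}_\infty<\infty$; this depends only on $\Phi$. Taking $\esssup$ over $p$ in the pointwise bound from the previous paragraph finishes the argument.

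The only real point requiring care — the ``main obstacle'' — is bookkeeping rather than depth: one must pass from the $L^\infty$ control of the individual entries $Z^{ij}$ (which is all (CO1) gives directly) to an operator-norm bound, and this works precisely because non-negative definiteness forces the largest eigenvalue to be controlled by the trace; and one must invoke $P_pZ_pP_p=Z_p$ at the right moment so that the statement comes out in terms of the intrinsic norm $\norm{d^K_pF}_{T_p^*K}$ rather than $|\nabla F(p)|$. Everything else is routine linear algebra and direct appeals to Lemma \ref{gammaandz} and Proposition \ref{prop:seminormZ}.
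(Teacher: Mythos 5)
Your argument is correct, and since the paper states Lemma \ref{Zopnorm} without proof, what you have written is exactly the intended justification: Lemma \ref{gammaandz} reduces the claim to the spectral bound $\langle v,Z_pv\rangle\le\lambda_m(Z_p)|v|^2$ for the non-negative matrix $Z_p$, the identity $P_pZ_pP_p=Z_p$ from Proposition \ref{prop:seminormZ} upgrades $|v|$ to $|P_pv|=\norm{d^K_pF}_{T^*_pK}$, and $\lambda_m\le\operatorname{Tr}Z_p=\sum_i d\Gamma(\phi^i)/d\mu\in L^\infty$ by (CO1). Your reading of the final inequality as the (squared) domination of the quotient norm by $|\nabla F(p)|$ is the natural correction of what appears to be a missing exponent in the statement.
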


\begin{theorem}\label{reccurves}
If $\gamma:[a,b]\to K$ is a curve such that $L_\mu(\gamma)< \infty$, then $\tilde\gamma:=\Phi\circ \gamma$ is a rectifiable curve in $\R^m$, and the Euclidean length of $\gamma$ is bounded by $c_ZL_\mu(\gamma)$.
\end{theorem}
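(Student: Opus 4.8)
The plan is to compare the Euclidean length of $\tilde\gamma = \Phi\circ\gamma$ with the intrinsic length $L_\mu(\gamma)$ by estimating, on each pair of consecutive points of a partition, the Euclidean distance in terms of the intrinsic distance. First I would recall that the Euclidean length of $\tilde\gamma$ is the supremum over partitions $a = t_0 < t_1 < \cdots < t_k = b$ of $\sum_{i=1}^k \abs{\tilde\gamma(t_i) - \tilde\gamma(t_{i-1})}$, where $\abs{\cdot}$ is the Euclidean norm on $\R^m$. So it suffices to show that for every such partition, $\sum_{i=1}^k \abs{\Phi(\gamma(t_i)) - \Phi(\gamma(t_{i-1}))} \leq c_Z \, L_\mu(\gamma)$, since $\sum_{i=1}^k \rho_\mu(\gamma(t_{i-1}),\gamma(t_i)) \leq L_\mu(\gamma)$ by definition of $L_\mu$.

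The key step is therefore the pointwise comparison: for any $x, y \in K$,
\[
\abs{\Phi(x) - \Phi(y)} \leq c_Z \, \rho_\mu(x,y).
\]
To prove this, I would test the definition of $\rho_\mu$ against a well-chosen coordinate function. Fix $x,y$ and let $v = \Phi(x) - \Phi(y) \in \R^m$; if $v = 0$ there is nothing to prove, so assume $v \neq 0$ and consider the linear function $\ell(z) = v \cdot z / \abs{v}$ on $\R^m$, which we may truncate to lie in $C^1_0(U)$ without changing its values or gradient near the (compact) range of $\Phi$ on $\gamma$. Then $F := \ell$ has $\nabla F \equiv v/\abs{v}$, so $\abs{\nabla F} \equiv 1$, and by Lemma~\ref{Zopnorm} we get $\pkgen{d^K_p F, (Zd^K F)_p} \leq c_Z$ almost everywhere. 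Hence $F/\sqrt{c_Z}$ (suitably rescaled) is admissible in the sup defining $\rho_\mu$ via the characterization in the Proposition preceding Lemma~\ref{Zopnorm}, namely $\rho_\mu(x,y) = \sup\set{F\circ\Phi(x) - F\circ\Phi(y) : \norm{\pkgen{d^K F, Zd^K F}}_\infty \leq 1}$. This yields $\ell(\Phi(x)) - \ell(\Phi(y)) = \abs{v} \leq \sqrt{c_Z}\,\rho_\mu(x,y)$; taking the constant to be $\sqrt{c_Z}$ or bounding it by $c_Z$ (the statement as written uses $c_Z$, which dominates since we may assume $c_Z \geq 1$ after enlarging) gives the claim. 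Summing over the partition then gives Euclidean length of $\tilde\gamma$ bounded by $c_Z L_\mu(\gamma) < \infty$, so $\tilde\gamma$ is rectifiable.

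The main obstacle I anticipate is the bookkeeping around the constant and the admissibility of the test function: one must ensure the truncated linear function $\ell$ is genuinely in $C^1_0(U)$ (handled by multiplying by a bump function that is $\equiv 1$ on a compact neighborhood of $\Phi(\gamma([a,b]))$, which does not affect gradients there), and one must correctly pass between the energy-density condition $\norm{d\Gamma(f)/d\mu}_\infty \leq 1$ appearing in $\rho_\mu$ and the quadratic form $\pkgen{d^K F, Zd^K F}$ via Lemma~\ref{gammaandz}, which identifies $d\Gamma(F\circ\Phi)/d\mu$ with $\pkgen{d^K_p F, (Zd^K F)_p}$. Once that identification is in place the estimate is immediate; the only subtlety is tracking whether the sharp constant is $\sqrt{c_Z}$ or $c_Z$, and since the theorem only claims the bound $c_Z L_\mu(\gamma)$ it is harmless to use the (weaker) constant $c_Z \geq \sqrt{c_Z}$ whenever $c_Z \geq 1$, which we may arrange.
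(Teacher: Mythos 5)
Your proof is correct and hinges on exactly the same key estimate as the paper's --- Lemma \ref{Zopnorm} combined with the variational characterization of $\rho_\mu$, which together say that $\Phi:(K,\rho_\mu)\to\R^m$ is Lipschitz --- but you convert that estimate into the length bound by a genuinely different and more elementary mechanism. The paper reparametrizes $\gamma$ at unit speed with respect to $\rho_\mu$, shows $F\circ\tilde\gamma$ is Lipschitz for every $F$ with bounded gradient, and then invokes a Rademacher-type theorem to get almost-everywhere differentiability of $\tilde\gamma$ with $\abs{d\tilde\gamma/dt}$ bounded, leaving the final integration to the reader. You instead prove the pointwise bound $\abs{\Phi(x)-\Phi(y)}\le C\rho_\mu(x,y)$ directly (with the nice choice of the linear test function in the direction $\Phi(x)-\Phi(y)$, which avoids a dimensional factor $\sqrt m$ from arguing coordinate by coordinate) and sum over partitions, which needs no reparametrization and no measure-theoretic differentiability; the price is that you do not obtain the a.e.\ differentiability of $\tilde\gamma$ as a byproduct. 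You are also more careful about the constant than the paper: since $\Gamma$ is quadratic, normalizing the test function to satisfy $\norm{d\Gamma(f)/d\mu}_\infty\le 1$ genuinely produces $\sqrt{c_Z}$, and your remark that one should pass to $\max(c_Z,1)$ to recover the stated bound is the honest way to reconcile this (the paper's own proof silently uses $c_Z$ where its logic yields $\sqrt{c_Z}$). The one point to tighten is the truncation: the constraint in $\rho_\mu$ is $\norm{\pkgen{d^KF,Zd^KF}}_\infty\le 1$ as an essential supremum over all of $K$, not merely over $\gamma([a,b])$, so your bump function must equal $1$ on a neighborhood of all of $K_\Phi$ (automatic when $K$ is compact, as in the paper's examples), since otherwise the term $\ell\nabla\chi$ in the transition region can violate the global gradient bound. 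The paper's proof has the identical unaddressed issue for unbounded $U$, so this is not a defect of your argument relative to the source.
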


\begin{proof}
Because of Lemma \ref{Zopnorm}, assuming that $\gamma$ is a unit-speed parametrization (i.e. $\rho_\mu(\gamma(t),\gamma(s))= |t-s|$, which exists by standard metric space theory, see \cite{BBI01} for example.),
\[
|F(\tilde\gamma(t)) + F(\tilde\gamma(s))| \leq c_Z\rho_\mu(\gamma(t),\gamma(s))\sup_{\tau\in[t,s]}\abs{\nabla F(\tau)} = c_Z|t-s|\sup_{\tau\in[t,s]}\abs{\nabla F(\tau)}
\]
Thus $F\circ\tilde\gamma$ is a Lipschitz function for all $F$ with bounded gradients. Thus by \cite{Fed}, $\gamma$ is almost everywhere differentiable, and 
$
\abs{d\tilde\gamma/dt} < c_Z $ almost everywhere.
\end{proof}

\subsection{Harmonic Coordinates on the Sierpinski Gasket}\label{harmsg}

Let $\SG$ be the Sierpinski gasket with resistance form $\eng$ as in \cite{Kig01,Str06}, and let $\Phi = (\phi_1,\phi_2)$ be harmonic coordinates as in \cite{Kig93-2,Kaj12,Kaj13}. 
In \cite{Kig93-2}, it is shown that for $f,g\in C^1(\R)$ with $f-g\in \idea_{\SG_\Phi}$ if an only if $\nabla f = \nabla g$ on $\SG_\Phi$ . This implies that $T_x^* \SG_\Phi = T_x^*\R^2$ for $x\in \SG_\Phi$.  With this in mind, in this section we refer to $d^{\SG_\Phi}$ as simply $d$. Further, it is proven in \cite{Kig93-2} that $\Phi$ is a coordinate sequence.

We shall consider the space $\ltwoform(\SG_\Phi,\nu)$ where $\nu = \Gamma(\phi^1)+\Gamma(\phi^2)$ is the Kusuoka measure. With the natural norm
\[
\gen{\omega_1dx^1+\omega_2 dx^2,\eta_1dx^1+\eta_2 dx^2} = \int \omega_1\eta_1 + \omega_2\eta_2 \ d\nu 
\]

In \cite{Kig93-2} it is shown that there is a tensor field $Z_x$ such that 
\begin{align*}
\int \gen{d f(x), Z_x d f(x)} \ d\tilde{\nu}(x) 
& = \eng(f\circ \Phi)=\norm{\pi d f }_{\mathcal H} .
\end{align*}
Since
\[
\norm{\pi d f}_{\mathcal{H}}=\int \abs{\frac{\partial f}{\partial x^1}}^2 \ d\Gamma(\phi_1)+\abs{\frac{\partial f}{\partial x^2}}^2 \ d\Gamma(\phi_2) + 2\frac{\partial f}{\partial x^1}\frac{\partial f}{\partial x^2}\ d\Gamma(\phi_1,\phi_2)= \eng(f\circ \Phi),
\]
we see that
\[
(Z_x)_{ij} = \frac{d\Gamma(\phi^i,\phi^j)}{d\nu}.
\]
This also implies that
\begin{align}\label{simpleZ}
\int g^2\gen{d f(x), Z_x d f(x)} \ d\tilde\nu(x) = \norm{d f}_\mathcal{H} = \norm{\phi_1\otimes (g \frac{\partial f}{\partial x^1})+\phi_2\otimes (g \frac{\partial f}{\partial x^2})}_{\mathcal{H}}
\end{align}

For the operator $\pi: \Omega^1_L(\SG_\Phi) \to\mathcal{H} $
\[
\pi (\omega_1 d x^1 +\omega_2 d x^2)= \phi^1\otimes \omega_1+\phi^2\otimes \omega_2.
\]

\bibliography{Fractals}{}
\bibliographystyle{amsalpha}
%
\end{document}